\newtheorem{same}{This should never appear}[section]
\newtheorem{defin}[same]{Definition}
\newtheorem{remark}[same]{Remark}
\newtheorem{theorem}[same]{Theorem}
\newtheorem{lemma}[same]{Lemma}
\newtheorem{fact}[same]{Fact}
\newtheorem{question}[same]{Question}
\newtheorem{cor}[same]{Corollary}
\newtheorem{prop}[same]{Proposition}
\newtheorem{nota}[same]{Notation}
\newtheorem{defin*}{Definition}
\newtheorem*{theorem*}{Theorem}
\newbox\noforkbox \newdimen\forklinewidth
\noforkbox\hbox{\lower 2pt\box1\lower 2pt\box0\relax}
\def\unionstick{\mathop{\copy\noforkbox}\limits}
\def\nonfork_#1{\unionstick_{\textstyle #1}}
\newbox\doesforkbox
\doesforkbox\hbox{\lower 2pt\box1 \lower 2pt\box2\lower2pt\box0\relax}
\def\nunionstick{\mathop{\copy\doesforkbox}\limits}
\def\fork_#1{\nunionstick_{\textstyle #1}}
\newcommand{\skipitems}[1]{%
  \addtocounter{\@enumctr}{#1}%
}
\newcommand{\ba}{\bold{a}}
\newcommand{\bb}{\bold{b}}
\newcommand{\rest}{\upharpoonright}
\newcommand{\K}{\mathbf{K}}
\newcommand{\LS}{\operatorname{LS}}
\newcommand{\leap}[1]{\le_{#1}}
\newcommand{\lea}{\leap{\K}}
\newcommand{\gtp}{$ga-tp$}
\newcommand{\gS}{$gS$}
\title{Algebraic description of limit models in classes of abelian groups }
\date{\today.} 
\author{Marcos Mazari-Armida}
\email{mmazaria@andrew.cmu.edu}
\urladdr{http://www.math.cmu.edu/~mmazaria/ }
\address{Department of Mathematical Sciences \\ Carnegie Mellon University \\ Pittsburgh, Pennsylvania, USA}
\begin{document}

\maketitle

{\let\thefootnote\relax\footnote{{AMS 2010 Subject Classification: Primary 03C48. Secondary: 03C45, 20K20.
Key words and phrases. Abstract Elementary Classes; Limit models; Abelian groups; Torsion-free groups.}}}  

\begin{abstract}

We study limit models in the class of abelian groups with the subgroup relation and in the class of torsion-free abelian groups with the pure subgroup relation. We show:
\begin{theorem}\
\begin{enumerate}
\item If $G$ is a limit model of cardinality $\lambda$ in the class of abelian groups with the subgroup relation, then $G \cong (\oplus_{\lambda}\mathbb{Q}) \oplus \oplus_{p \text{ prime}} (\oplus_{\lambda} \mathbb{Z}(p^\infty))$.
\item  If $G$ is a limit model of cardinality $\lambda$ in the class of torsion-free abelian groups with the pure subgroup relation, then:
\begin{itemize}
\item If the length of the chain has uncountable cofinality, then \[ G \cong (\oplus_{\lambda} \mathbb{Q}) \oplus \Pi_{p \text{ prime}} \overline{(\oplus_{\lambda} \mathbb{Z}_{(p)})}.\]
\item If the length of the chain has countable cofinality, then $G$ is not algebraically compact.
\end{itemize}
\end{enumerate}
\end{theorem}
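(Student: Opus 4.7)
My strategy is to identify, in each of the two classes, the algebraic property that a limit model is forced to satisfy, then invoke a classical structure theorem, and finally count ranks to pin down the cardinal parameters.

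\textbf{Part (1).} I would first show that any limit model $G = \bigcup_{i<\alpha} M_i$ (with each $M_{i+1}$ universal over $M_i$) is divisible. Every abelian group of size $\lambda$ embeds into its divisible hull, which again has size $\lambda$; this divisible extension of $M_i$ must embed over $M_i$ into $M_{i+1}$ by universality. Passing to the union, $G$ is divisible, and by the Baer--Pr\"ufer structure theorem $G \cong (\oplus_I \mathbb{Q}) \oplus \oplus_p (\oplus_{J_p} \mathbb{Z}(p^\infty))$. At each stage, $M_{i+1}$ absorbs $M_i \oplus \mathbb{Q}$ and $M_i \oplus \mathbb{Z}(p^\infty)$, so the torsion-free rank and each $p$-rank grow strictly along the chain; combined with $|G|=\lambda$, this forces $|I| = |J_p| = \lambda$.

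\textbf{Part (2).} In the torsion-free case with the pure subgroup relation, I would first establish that each $M_{i+1}$ contains a pure copy of the algebraically compact (pure-injective) hull of $M_i$: this hull has cardinality $\lambda$ and is a pure extension, hence is absorbed by universality. By the classical structure theorem for algebraically compact torsion-free abelian groups, any such group has the form $(\oplus \mathbb{Q}) \oplus \prod_p \overline{(\oplus \mathbb{Z}_{(p)})}$. In the uncountable-cofinality case I would show that $G$ itself is algebraically compact: any finitary pp-type over $G$ uses only countably many parameters and thus lives over some $M_i$, where it is realized inside the pure-injective hull of $M_i$ sitting in $M_{i+1} \subseteq G$; rank counting as in part (1) then fixes the form. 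In the countable-cofinality case I would exhibit a pp-$1$-type witnessing that $G$ is not algebraically compact: along a cofinal $\omega$-sequence of stages, pick elements whose formal $p$-adic limit lies in the pure-injective hull of every $M_{i_n}$ but in none of the $M_{i_n}$; the associated ever-stronger divisibility conditions form a pp-type consistent with $G$ yet unrealized in it.

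\textbf{Main obstacle.} The hardest point is the dichotomy in part (2) between countable and uncountable cofinality. The divisible-hull argument of part (1) only gives that pure extensions of $M_i$ of size $\lambda$ embed into $M_{i+1}$; upgrading this to full realization of pp-types over the \emph{union} $G$ requires the cofinality hypothesis in an essential way, and producing an \emph{explicit} unrealized pp-type in the countable-cofinality case demands delicate control so that the constructed Cauchy sequence really escapes $G$. Aligning this algebraic-compactness dichotomy with the cofinality of the limit chain is the technical heart of the theorem.
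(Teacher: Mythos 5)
Your part (1) and your route to algebraic compactness in the uncountable-cofinality case are essentially sound, and the latter is genuinely different from the paper: where you realize a countable, finitely solvable system by retracting it into the pure-injective hull of some $M_i$ (which is then absorbed into $M_{i+1}$ by universality), the paper instead builds a torsion-free pure extension of $M_i$ realizing the system by first-order compactness applied to the elementary diagram of $M_i$ together with axioms forcing purity. Your version is more algebraic but needs two points you elide: that the pure-injective hull of $M_i$ has cardinality $\lambda$ (true, but only because limit models exist only when $\lambda^{\aleph_0}=\lambda$), and that finite solvability in $G$ transfers to the hull (via the homomorphism $G\to\widehat{M_i}$ over $M_i$ supplied by pure-injectivity). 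Two smaller but real defects: in part (1), ``the ranks grow strictly along the chain, combined with $|G|=\lambda$'' does not yield rank $\lambda$ --- along an $\omega$-chain strict growth only gives rank $\geq\aleph_0$; you must absorb $M_i\oplus(\oplus_\lambda\mathbb{Q})\oplus\oplus_p(\oplus_\lambda\mathbb{Z}(p^\infty))$ in a \emph{single} step, which universality permits since this group still has cardinality $\lambda$. Likewise in part (2) the invariants of an algebraically compact torsion-free group are $\dim_{\mathbb{F}_p}(G/pG)$ and the rank of the divisible part, and computing $\dim_{\mathbb{F}_p}(G/pG)=\lambda$ is not ``rank counting as in part (1)'': one must absorb a pure copy of $\oplus_\lambda\mathbb{Z}_{(p)}$ and then use purity of that copy in $G$ to check the generators remain independent modulo $pG$ (the paper's Proposition on $\dim_{\mathbb{F}_p}(G/pG)$).

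The genuine gap is the countable-cofinality case, which you correctly flag as the heart but do not actually prove. Saying the formal limit ``lies in the pure-injective hull of every $M_{i_n}$ but in none of the $M_{i_n}$'' is not a construction: the difficulty is to ensure that \emph{no} element of any $M_{i_n}$ --- a set of size $\lambda$ --- realizes the type, using only $\omega$ many choices, and nothing in your sketch explains how the choices accomplish this. A workable mechanism would be: at stage $n$ use universality to find $e_n\in M_{n+1}$ with $e_n\notin M_n+pG$ (a generator of a pure copy of $M_n\oplus\mathbb{Z}_{(p)}$ works, by purity), and set $a_{n+1}=a_n+p^ne_n$; then any $g\in M_n$ realizing $p^n\mid(x-a_n)$ and $p^{n+1}\mid(x-a_{n+1})$ would force $e_n\in M_n+pG$, a contradiction, so the type $\{p^k\mid(x-a_k):k<\omega\}$ is finitely solvable but unrealized. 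Without some such diagonalization step the argument is incomplete. Note also that the paper proves this case by an entirely different, non-constructive route: if a $(\lambda,\omega)$-limit model were algebraically compact it would be isomorphic to the uncountable-cofinality limit model, hence all limit models of size $\lambda$ would be Galois-saturated, which by superstability transfer results of Grossberg--Vasey and Vasey would force $\chi$-Galois-stability for all $\chi\geq\lambda$, contradicting the stability spectrum $\lambda^{\aleph_0}=\lambda$; a separate reflection argument then transfers the failure down to $\lambda<\beth_{(2^{\aleph_0})^++\omega}$. So your proposed direct construction, if completed, would actually be a simplification the paper does not carry out --- but as written it is missing its key step.
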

We also study the class of finitely Butler groups with the pure subgroup relation, we show that it is an AEC, Galois-stable and $(<\aleph_0)$-tame and short.
\end{abstract}

\tableofcontents

\section{Introduction}

Abstract elementary classes (AECs for short) were introduced in the late seventies  by Shelah \cite{sh88} to capture the semantic structure of non-first-order theories, Shelah was interested in capturing logics like $\mathbb{L}_{\lambda^+, \omega}(\textbf{Q})$. The setting is general enough to encompass many examples, but it still allows a development of a rich theory as witnessed by Shelah's two volume book on the subject \cite{shelahaecbook} and many dozens of publications by several researchers. As a first approximation, an AEC is a class of structures with morphisms that is closed under colimits and such that every set is contained in a small model in the class.

\begin{defin}\label{aec-def}
  An \emph{abstract elementary class} is a pair $\K = (K, \lea)$, where:

  \begin{enumerate}
    \item $K$ is a class of $\tau$-structures, for some fixed language $\tau = \tau (\K)$. 
    \item $\lea$ is a partial ordering on $K$. 
    \item $(K, \lea)$ respects isomorphisms: If $M \lea N$ are in $K$ and $f: N \cong N'$, then $f[M] \lea N'$. In particular (taking $M = N$), $K$ is closed under isomorphisms.
    \item If $M \lea N$, then $M \subseteq N$. 
    \item Coherence: If $M_0, M_1, M_2 \in K$ satisfy $M_0 \lea M_2$, $M_1 \lea M_2$, and $M_0 \subseteq M_1$, then $M_0 \lea M_1$.
    \item Tarski-Vaught axioms: Suppose $\delta$ is a limit ordinal and $\{ M_i \in K : i < \delta \}$ is an increasing chain. Then:

        \begin{enumerate}

            \item $M_\delta := \bigcup_{i < \delta} M_i \in K$ and $M_i \lea M_\delta$ for every $i < \delta$.
            \item\label{smoothness-axiom}Smoothness: If there is some $N \in K$ so that for all $i < \delta$ we have $M_i \lea N$, then we also have $M_\delta \lea N$.

        \end{enumerate}

    \item L\"{o}wenheim-Skolem-Tarski axiom: There exists a cardinal $\lambda \ge |\tau(\K)| + \aleph_0$ such that for any $M \in K$ and $A \subseteq |M|$, there is some $M_0 \lea M$ such that $A \subseteq |M_0|$ and $\|M_0\| \le |A| + \lambda$. We write $\LS (\K)$ for the minimal such cardinal.
  \end{enumerate}
\end{defin}

The main objective in the study of AECs is to develop a classification theory like the one of first-order model theory. The notions of non-forking, superstability and stability have been extended to this more general setting. The main test question is Shelah's eventual categoricity conjecture which asserts that if an AEC is categorical in \emph{some} large cardinal then it is categorical in \emph{all} large cardinals. Many partial results have been obtained in this direction as witnessed by for example \cite{sh87a}, \cite{sh87b}, \cite{sh394}, \cite{shelahaecbook}, \cite{tamenesstwo}, \cite{tamenessthree}, \cite{bont}, \cite{vaseye}, \cite{vaseyf},\cite{vaseyu},   \cite{vasey18} and \cite{shvas}.\footnote{For a more detailed introduction to the theory of AECs we suggest the reader to look at \cite{grossberg2002}, \cite{baldwinbook09} or \cite{survey} (this only covers tame AECs, but the AECs that we will study in this paper are all tame).}

The notion of limit model was introduced in \cite{kosh} as a substitute for saturation in the non-elementary setting (see Definition \ref{limit}). If  $\lambda > \LS(\K)$ is a regular cardinal and $\K$ is an AEC with joint embedding, amalgamation and no maximal models, then: $M$ is $\lambda$-Galois-saturated if and only if $M$ is a $(\lambda, \lambda)$-limit model (\cite[2.8]{grva}).

Limit models have proven to be an important concept in tackling Shelah's eventual categoricity conjecture as witnessed by for example \cite{shvi}, \cite{tamenessone} and \cite{vasey18}. The key question has been the uniqueness of limit models of the same cardinality but with chains of different lengths. This has been studied thoroughly \cite{shvi}, \cite{van06}, \cite{grvavi}, \cite{extendingframes}, \cite{vand}, \cite{bovan}, \cite{viza} and \cite{vasey18}. In this same line, \cite{grva} and \cite{vas16a} showed that if a class has a monster model and is tame then uniqueness of limit models on a tail of cardinals is equivalent to being Galois-superstable\footnote{We say that $\K$ is
\emph{Galois-superstable} if there is $\mu < \beth_{(2^{\LS(\K)})^+}$
such that $\K$ is $\lambda$-Galois-stable for every $\lambda\geq \mu$. Under
the assumption of joint embedding, amalgamation, no maximal models
and $\LS(\K)$-tameness (which hold for all the classes studied in this paper, except perhaps the one introduced in the last section) by \cite{grva} and \cite{vaseyt} the definition
of the previous line is equivalent to any other definition of
Galois-superstability given in the context of AECs.}.

Despite the importance of limit models in the understanding of AECs, explicit examples have never been studied. This paper ends this by studying  examples of limit models in some classes of abelian groups. The need to analyze examples is also motivated by the regular inquiry of the model theory community when presenting results on AECs. In particular, the analysis of limit models in the class of torsion-free abelian groups provides a missing example needed for\cite{bovan}.

 In this article, we study limit models in the class of abelian groups with the subgroup relation and in the class of torsion-free abelian groups with the pure subgroup relation\footnote{Recall that $H$ is a pure subgroup of $G$ if for every $n \in \mathbb{N}$ it holds that $nG \cap H  = nH$.}. Observe that both classes are first-order axiomatizable, but since we are studying them with a strong substructure relation that is different from elementary substructure, their study is outside of the framework of first-order model theory. This freedom in choosing the strong substructure relation is a key feature of our examples and in the context of AECs has only been exploited in \cite{grp} and \cite{baldwine}.

The case of limit models in the class of abelian groups is simple.

\textbf{Theorem \ref{abelian}.}
\textit{Let $\alpha < \lambda^+$ a limit ordinal. If $G$ is a $(\lambda, \alpha)$-limit model in the class of abelian groups with the subgroup relation, then we have that:
\[ G \cong (\oplus_{\lambda}\mathbb{Q}) \oplus \oplus_{p \text{ prime}} (\oplus_{\lambda} \mathbb{Z}(p^\infty)).\]}

The case of torsion-free abelian groups (with the pure subgroup relation) is more interesting and the examination of limit models is divided into two cases. In the first one, we study limit models with chains of uncountable cofinality and by showing that they are algebraically compact we are able to give a full structure theorem. In the second one, we study limit models with chains of countable cofinality and we show that they are not algebraically compact. More precisely we obtain the following.

\textbf{Theorem \ref{tf}.}
\textit{Let $\alpha < \lambda^+$ a limit ordinal. If $G$ is a $(\lambda, \alpha)$-limit model in the class of torsion-free abelian groups with the pure subgroup relation, then we have that:
\begin{enumerate}
\item If the cofinality of $\alpha$ is uncountable, then  \[ G \cong  (\oplus_{\lambda} \mathbb{Q}) \oplus \Pi_{p \text{ prime}} \overline{(\oplus_{\lambda} \mathbb{Z}_{(p)})}.\]
\item If the cofinality of $\alpha$ is countable, then $G$ is not algebraically compact.
\end{enumerate}}
In particular, the class does not have uniqueness of limit models for any infinite cardinal. 
\vspace{2mm}
\\
The paper is organized as follows. Section 2 presents necessary background. Section 3 characterizes limit models in the class of abelian groups with the subgroup relation. Section 4 studies the class of torsion-free abelian groups with the pure subgroup relation. We show that limit models of uncountable cofinality are algebraically compact (and characterize them) while those of countable cofinality are not.  Section 5 studies basic properties of the class of finitely Butler groups.

This paper was written while the author was working on a Ph.D. under the direction of Rami Grossberg at Carnegie Mellon University and I would like to thank Professor Grossberg for his guidance and assistance in my research in general and in this work in particular. I would also like to thank John T. Baldwin, Hanif Cheung, Sebastien Vasey and an anonymous referee for valuable comments that significantly improved the paper.

\section{Preliminaries}
We present the basic concepts of abstract elementary classes that are used in this paper. These are further studied in \cite[\S 4 - 8]{baldwinbook09} and  \cite[\S 2, \S 4.4]{ramibook}. Regarding the background on abelian groups, we assume that the reader has some familiarity with it and introduce the necessary concepts throughout the text.\footnote{An excellent encyclopedic resource is \cite{fuchs}. We recommend the reader to keep a copy of \cite{fuchs} nearby since we will cite frequently from it, specially in the last section.}

\subsection{Basic notions} 
Before we introduce some concepts let us fix some notation.

\begin{nota}\
\begin{itemize}
\item If $M \in K$, $|M|$ is the underlying set of $M$.
\item If $\lambda$ is a cardinal, $\K_{\lambda} =\{ M \in K : \| M \| =\lambda \}$.
\item Let $M, N \in K$. If we write ``$f: M \to N$" we assume that $f$ is a $\K$-embedding, i.e., $f: M \cong f[M]$ and $f[M] \lea N$. Observe that in particular $\K$-embeddings are always monomorphisms.
\end{itemize}
\end{nota}
All the examples that we consider in this paper have the additional property of admitting intersections. This class of AECs was introduced in \cite{bash} and further studied in \cite[\S 2]{vaseyu}.

\begin{defin} An AEC \emph{admits intersections} if for every $N \in K$ and $A \subseteq |N|$ there is $M_0 \lea N$ such that $|M_0|= \bigcap\{M \lea N : A \subseteq |M|\}$. For $N \in K$ and $A \subseteq |N|$, we denote by  $cl^{N}_{\K}(A)=\bigcap\{M \lea N : A \subseteq |M|\}$, if it is clear from the context we will drop the $\K$.
\end{defin}

Since an AEC is a semantic object, the notion of syntactic type (first-order type) does not interact well with the strong substructure relation of the AEC. Even when the AEC is axiomatizable in some extension of first-order logic, syntactic types do not behave well since equality of types does not imply the existence of $\K$-embeddings between the models mentioned in the types. For this reason Shelah introduced a notion of semantic type called Galois-type. We use the terminology of \cite[2.5]{mv}.

\begin{defin}\label{gtp-def}
  Let $\K$ be an AEC.
  
  \begin{enumerate}
    \item Let $\K^3$ be the set of triples of the form $(\bb, A, N)$, where $N \in K$, $A \subseteq |N|$, and $\bb$ is a sequence of elements from $N$. 
    \item For $(\bb_1, A_1, N_1), (\bb_2, A_2, N_2) \in \K^3$, we say $(\bb_1, A_1, N_1)E_{\text{at}} (\bb_2, A_2, N_2)$ if $A := A_1 = A_2$, $\ell(\bb_1)=\ell(\bb_2)$ and there exists $f_\ell : N_\ell \xrightarrow[A]{} N$ such that $f_1 (\bb_1) = f_2 (\bb_2)$.
    \item Note that $E_{\text{at}}$ is a symmetric and reflexive relation on $\K^3$. We let $E$ be the transitive closure of $E_{\text{at}}$.
    \item For $(\bb, A, N) \in \K^3$, let $\gtp_{\K} (\bb / A; N) := [(\bb, A, N)]_E$. We call such an equivalence class a \emph{Galois-type}. Usually, $\K$ will be clear from context and we will omit it.
\item For $\gtp_{\K} (\bb / A; N)$ and $C \subseteq A$, $\gtp_{\K} (\bb / A; N)\upharpoonright_{C}:= [(\bb, C, N)]_E$.
  \end{enumerate}
\end{defin}

In classes that admit intersections types are easier to describe as it was shown in \cite[2.18]{vaseyu}. 

\begin{fact}\label{tp-int}
Let $\K$ be an AEC that admits intersections. $\gtp(\ba_1/ A ; N_1)=\gtp(\ba_2/A ; N_2)$ if and only if there is $f: cl^{N_1}(\ba_1 \cup A) \cong_{A} cl^{N_2}(\ba_2 \cup A)$ such that $f(\ba_1)=\ba_2$.
\end{fact}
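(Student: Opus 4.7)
The plan is to prove both directions, the harder one ($\Rightarrow$) via a chain argument resting on a closure-preservation lemma for $\K$-embeddings.

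For the easy direction ($\Leftarrow$), set $M_i := cl^{N_i}(\ba_i \cup A)$, so $M_i \lea N_i$ by the definition of admitting intersections. The inclusion $M_i \hookrightarrow N_i$ together with $\id_{N_i}$ witnesses $(\ba_i, A, M_i) \, E_{\text{at}} \, (\ba_i, A, N_i)$, and the hypothesized isomorphism $f : M_1 \cong_A M_2$ with $f(\ba_1)=\ba_2$ (paired with $\id_{M_2}$ into the common model $M_2$) witnesses $(\ba_1, A, M_1) \, E_{\text{at}} \, (\ba_2, A, M_2)$. Chaining these three $E_{\text{at}}$-links yields the desired $E$-equivalence.

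For the hard direction ($\Rightarrow$), I would first establish the following closure-preservation lemma: for any $\K$-embedding $g : M \to N$ in a class admitting intersections and any $B \subseteq |M|$, one has $g[cl^M(B)] = cl^N(g[B])$. The inclusion $\supseteq$ follows because $g[cl^M(B)] \lea g[M] \lea N$ (by the isomorphism axiom applied to $g$ and transitivity of $\lea$), making $g[cl^M(B)]$ a strong submodel of $N$ containing $g[B]$, which by minimality sits above $cl^N(g[B])$. The reverse inclusion uses coherence: since $g[M] \lea N$ contains $g[B]$ we have $cl^N(g[B]) \subseteq g[M]$, so coherence gives $cl^N(g[B]) \lea g[M]$; pulling back, $g^{-1}[cl^N(g[B])] \lea M$ contains $B$, hence contains $cl^M(B)$ by minimality.

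Granted this lemma, unfold the assumed type equality as a finite chain of $E_{\text{at}}$-links $(\ba_1, A, N_1) = (\bc_0, A, P_0) \, E_{\text{at}} \, (\bc_1, A, P_1) \, E_{\text{at}} \cdots E_{\text{at}} \, (\bc_n, A, P_n) = (\ba_2, A, N_2)$. Each link provides a common model $R_i$ with embeddings $g_i, h_i$ fixing $A$ and satisfying $g_i(\bc_{i-1}) = h_i(\bc_i)$. The lemma applied to $g_i$ and $h_i$ yields $g_i[cl^{P_{i-1}}(\bc_{i-1} \cup A)] = cl^{R_i}(g_i(\bc_{i-1}) \cup A) = h_i[cl^{P_i}(\bc_i \cup A)]$, so $h_i^{-1} \circ g_i$ restricts to an isomorphism of the respective closures fixing $A$ and sending $\bc_{i-1} \mapsto \bc_i$. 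Composing these isomorphisms along the chain produces the required $f$. The main obstacle is the closure-preservation lemma, which rests on the delicate interplay between the coherence and isomorphism axioms; once it is in hand, the rest of the argument is routine bookkeeping along the $E_{\text{at}}$-chain.
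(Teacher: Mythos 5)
Your proof is correct, and it is essentially the standard argument: the paper itself does not prove this fact but cites \cite[2.18]{vaseyu}, where the proof runs exactly along your lines (the closure-preservation lemma for $\K$-embeddings, proved from the isomorphism and coherence axioms, followed by the chain/back-and-forth bookkeeping over the $E_{\text{at}}$-links). No gaps.
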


The notion of Galois-stability generalizes that of a stable first-order theory. Since it will play an important role, as witness by Fact \ref{existence}, we recall it.

\begin{defin}\
\begin{itemize}
\item An AEC is \emph{$\lambda$-Galois-stable} if for any $M \in \K_\lambda$ it holds that $| \gS(M) | \leq \lambda$, where $\gS(M) = \{ \gtp(a/M; N) : M \lea N \text{ and } a \in N\}$. Observe that $\gS(M)$ denotes the 1-ary Galois-types over $M$.
\item An AEC is \emph{Galois-stable} if there is a $\lambda \geq \LS(\K)$ such that $\K$ is $\lambda$-Galois-stable.
\end{itemize}
\end{defin}

Tameness (for saturated models) appears implicitly in the work of Shelah \cite{sh394}, but it was not until  
Grossberg and VanDieren isolated it in \cite{tamenessone} that it became a central notion in the study of  AECs. Tameness was first used to prove a stability spectrum theorem in \cite{tamenessone} and to prove an upward categoricity transfer theorem in \cite{tamenesstwo}. For further details on tameness the reader can consult the survey by Boney and Vasey
\cite{survey}.

\begin{defin}
 $\K$ is \emph{$(< \kappa)$-tame} if for any $M \in K$ and $p \neq q \in \gS(M)$,  there is $A \subseteq M$ such that $|A |< \kappa$ and $p\upharpoonright_{A} \neq q\upharpoonright_{A}$.
\end{defin}

Later, Boney isolated an analogous notion to tameness which he called type shortness in \cite{bont}.

\begin{defin}
$\K$ is \emph{$(<\kappa)$-short} if for any $M, N \in K$, $\bar{a} \in M^{\alpha}$, $\bar{b} \in N^{\alpha}$  and $\gtp(\bar{a}/\emptyset, M )\neq \gtp(\bar{b}/\emptyset, N)$, there is $I \subseteq \alpha$ such that $|I| < \kappa$ and $\gtp(\bar{a}\upharpoonright_I /\emptyset ;  M )\neq \gtp(\bar{b}\upharpoonright_I/\emptyset ; N)$.
\end{defin}

\subsection{Limit models} Before introducing the concept of limit model we recall the concept of universal model.

\begin{defin}
$M$ is \emph{universal over} $N$ if and only if $N \lea M$, $\| M \|=\| N\| =\lambda$ and for any $N^* \in \K_\lambda$ such that $N \lea N^*$, there is $f: N^* \xrightarrow[N]{} M$. 
\end{defin}

Recall that an increasing chain  $\{ M_i : i < \alpha\}\subseteq \K$ (for $\alpha$ an ordinal) is a \emph{continuous chain} if $M_i =\bigcup_{j < i} M_j$  for every $i < \alpha$ limit ordinal.
With this we are ready to introduce the main concept of this paper, it was originally introduced in \cite{kosh}.

\begin{defin}\label{limit}
Let $\alpha < \lambda^+$ a limit ordinal.  $M$ is a \emph{$(\lambda, \alpha)$-limit model over} $N$ if and only if there is $\{ M_i : i < \alpha\}\subseteq \K_\lambda$ an increasing continuous chain such that $M_0 :=N$, $M_{i+1}$ is universal over $M_i$ for each $i < \alpha$ and $M= \bigcup_{i < \alpha} M_i$. We say that $M\in \K_\lambda$ is a $(\lambda, \alpha)$-limit model if there is $N \in \K_\lambda$ such that $M$ is a $(\lambda, \alpha)$-limit model over $N$. We say that $M\in \K_\lambda$ is a limit model if there is $\alpha < \lambda^+$ limit such that $M$  is a $(\lambda, \alpha)$-limit model.

\end{defin}

\begin{fact}\label{simple}\
\begin{enumerate}
\item If $M \in \K_\lambda$ is universal over $N$ and $M \lea M^* \in \K_\lambda$, then $M^*$ is universal over $N$.  
\item Let $\K$ be an AEC with joint embedding and amalgamation. If $M$ is a limit model of cardinality $\lambda$, then for any $N \in \K_\lambda$ there is $f: N \to M$.
\end{enumerate}
\end{fact}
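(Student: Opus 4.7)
The plan is to verify both parts directly from the definitions of limit model and universal model, without invoking any deep machinery.

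For part (1), suppose $M$ is universal over $N$ and $M \lea M^* \in \K_\lambda$. By transitivity of $\lea$ (a standard consequence of the AEC axioms) we have $N \lea M^*$. Now fix any $N^* \in \K_\lambda$ with $N \lea N^*$; by universality of $M$ over $N$ there is a $\K$-embedding $g : N^* \to M$ with $g\rest N = \te{id}_N$. Composing $g$ with the inclusion $M \hookrightarrow M^*$ yields the required $\K$-embedding $N^* \to M^*$ fixing $N$ pointwise.

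For part (2), write $M = \bigcup_{i<\alpha} M_i$ where $\{M_i : i < \alpha\} \subseteq \K_\lambda$ is an increasing continuous chain with each $M_{i+1}$ universal over $M_i$. Given $N \in \K_\lambda$, apply joint embedding to $M_0$ and $N$ to obtain some $P \in \K$ together with $\K$-embeddings $g : M_0 \to P$ and $h : N \to P$. Using the L\"owenheim--Skolem--Tarski axiom on the subset $g[M_0] \cup h[N] \subseteq P$ we may assume $P \in \K_\lambda$. Next, rename $P$: by the isomorphism axiom we can replace $P$ by an isomorphic copy $P'$ in which $g$ becomes the identity, so that $M_0 \lea P'$, with an induced embedding $h' : N \to P'$. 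Since $M_1$ is universal over $M_0$ and $P' \in \K_\lambda$, there is $f : P' \to M_1$ with $f \rest M_0 = \te{id}_{M_0}$. Finally, $M_1 \lea M$ by the Tarski--Vaught axiom applied to the defining chain, so $f \circ h' : N \to M$ is the desired $\K$-embedding.

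The only mildly delicate point in the whole argument is the renaming step used to pass from the abstract joint embedding to a situation where $M_0$ is literally a $\K$-substructure of $P$; this is a routine application of the isomorphism axiom. It is worth noting that the argument never uses amalgamation, only joint embedding together with universality of $M_1$ over $M_0$.
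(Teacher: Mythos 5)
Your proof is correct and follows essentially the same route as the paper: part (1) by composing the embedding from universality with the inclusion $M \lea M^*$, and part (2) by applying joint embedding to $M_0$ and $N$, cutting down with L\"owenheim--Skolem--Tarski, and then using universality of $M_1$ over $M_0$. Your added observations (the explicit renaming step and the fact that amalgamation is never actually used) are accurate but do not change the argument.
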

\begin{proof}
The first assertion is trivial so we prove the second one. 

Fix  $\alpha < \lambda^+$, $\{M_i : i < \alpha\}$ a witness to the fact that $M$ is a $(\lambda, \alpha)$-limit model and let $N \in \K_\lambda$. By the joint embedding property applied to $M_0$ and $N$ and using the L\"{o}wenheim-Skolem-Tarski axiom there is $N^*\in \K_\lambda$ and $g: N \to N^*$ such that $M_0 \lea N^*$. Then since $M_1$ is universal over $M_0$, there is $h: N^* \xrightarrow[M_0]{} M_1$. Hence $f:= h \circ g: N \to M$.  \end{proof}

The following fact gives conditions for the existence of limit models.

\begin{fact}\label{existence}
Let $\K$ be an AEC with joint embedding, amalgamation and no maximal models. If $\K$ is $\lambda$-Galois-stable, then for every $N \in \K_\lambda$ and $\alpha < \lambda^+$ limit there is $M$ a $(\lambda, \alpha)$-limit model over $N$. Conversely, if $\K$ has a limit model of cardinality $\lambda$, then $\K$ is $\lambda$-Galois-stable
\end{fact}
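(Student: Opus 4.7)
The plan is to prove the two implications separately, the forward direction by producing universal extensions from stability and the converse by counting types through the chain witnessing the limit model.

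For the forward direction, I would first establish the key auxiliary fact: under amalgamation, joint embedding, and no maximal models, if $\K$ is $\lambda$-Galois-stable then every $M \in \K_\lambda$ admits a $\lea$-extension $M^* \in \K_\lambda$ that is universal over $M$. To build $M^*$, I would iterate a ``realize all types'' construction: enumerate $\gS(M) = \{p_\xi : \xi < \lambda\}$ (possible by stability), then recursively construct an increasing continuous chain $\{N_\xi : \xi < \lambda\} \subseteq \K_\lambda$ with $N_0 = M$ and such that every Galois-type over $N_\xi$ is realized in $N_{\xi+1}$ (each single realization is produced by amalgamating a one-point extension against $N_\xi$ and trimming to size $\lambda$ via the L\"owenheim-Skolem-Tarski axiom; no maximal models ensures the extensions are proper). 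Setting $M^* := \bigcup_{\xi<\lambda} N_\xi$, I would verify universality by embedding a given $N' \in \K_\lambda$ with $M \lea N'$ into $M^*$ element-by-element along an enumeration of $|N'|$, using at each stage that the Galois-type of the next element over the partial image has already been realized at some bounded stage of the chain and invoking amalgamation to propagate the embedding. Given this lemma, the $(\lambda,\alpha)$-limit model over $N$ is built by transfinite recursion of length $\alpha$: set $M_0 := N$, let $M_{i+1}$ be a universal extension of $M_i$ supplied by the lemma, and take unions at limits; each $M_i$ lies in $\K_\lambda$ because $|i| \le \alpha < \lambda^+$, and $\bigcup_{i<\alpha} M_i \in \K$ by the Tarski-Vaught chain axiom.

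For the converse, suppose $M = \bigcup_{i<\alpha} M_i$ is a $(\lambda,\alpha)$-limit model, so $M_{i+1}$ is universal over $M_i$ for each $i$. The argument rests on two observations. First, $|\gS(M_1)| \le \lambda$: every $p \in \gS(M_1)$ is realized by some element $a$ inside some $P \in \K_\lambda$ with $M_1 \lea P$, and the universality of $M_2$ over $M_1$ provides a $\K$-embedding $P \to M_2$ fixing $M_1$, so $p$ is realized inside $M_2$, giving $|\gS(M_1)| \le \|M_2\| = \lambda$. Second, every $N \in \K_\lambda$ admits a $\K$-embedding into $M_1$: by joint embedding combined with L\"owenheim-Skolem-Tarski and amalgamation one obtains $P \in \K_\lambda$ with $M_0 \lea P$ into which $N$ also $\K$-embeds, after which universality of $M_1$ over $M_0$ delivers a map $P \to M_1$ fixing $M_0$, whose restriction to the image of $N$ is the desired embedding. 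Combining these, for arbitrary $N \in \K_\lambda$ fix a $\K$-embedding $i : N \to M_1$; the restriction map $\gS(M_1) \to \gS(i[N])$ is surjective by amalgamation (extension of types), and $\gS(i[N])$ is canonically in bijection with $\gS(N)$ via $i$, so $|\gS(N)| \le |\gS(M_1)| \le \lambda$, yielding $\lambda$-Galois-stability.

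The main obstacle I anticipate is the verification that $M^*$ is universal in the forward direction: the back-and-forth embedding of an arbitrary $N' \in \K_\lambda$ with $M \lea N'$ into $M^*$ requires careful bookkeeping because realizations of Galois-types are only determined up to amalgamation, so at each inductive step one must invoke amalgamation to reconcile the partial embedding already constructed with a realization chosen earlier in the chain $\{N_\xi\}$. By contrast, the converse direction is quite clean once one observes that the universality of $M_1$ over $M_0$ makes $M_1$ a ``homogeneous target'' for every model in $\K_\lambda$.
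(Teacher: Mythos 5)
Your overall decomposition matches the standard one: the paper itself disposes of the forward direction by citing \cite[2.9]{tamenessone} and calls the converse straightforward, and your converse argument is correct and is exactly the intended ``straightforward'' proof (universality of $M_2$ over $M_1$ bounds $|\gS(M_1)|$ by $\lambda$; joint embedding plus universality of $M_1$ over $M_0$ embeds every $N\in\K_\lambda$ into $M_1$; amalgamation makes restriction of types surjective). The iteration of universal extensions to produce the $(\lambda,\alpha)$-limit model over $N$ is also fine.

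The gap is exactly at the step you flag as the main obstacle, and it is not a matter of bookkeeping: the forward, element-by-element embedding of $N'$ into $M^{*}=\bigcup_{\xi<\lambda}N_\xi$ fails in a general AEC. Suppose you have $f : P \to M^{*}$ with $P\lea N'$ and $f[P]\lea N_\xi$, and you want to absorb the next element $a\in N'$. Stability hands you $b\in N_{\xi+1}$ whose Galois-type over $f[P]$ is the pushforward of $\gtp(a/P;N')$, but this equality of types only yields a common extension $Q\gea N_{\xi+1}$ and a map of a copy of $N'$ into $Q$ sending $a$ to $b$; it does not yield a $\K$-embedding of any $P'\lea N'$ with $P\cup\{a\}\subseteq |P'|$ \emph{into} $N_{\xi+1}$, because in a general AEC there is no submodel of $N_{\xi+1}$ ``generated by'' $f[P]\cup\{b\}$ to serve as its image. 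So the inductive hypothesis that the partial embedding lands inside the chain cannot be maintained. The standard repair reverses the recursion: build an increasing continuous chain $\langle Q_\xi : \xi<\lambda\rangle$ with $Q_0=N'$ together with increasing embeddings $g_\xi : N_\xi\to Q_\xi$ with $g_0=\id_M$, and at stage $\xi$ realize in $N_{\xi+1}$ the pullback under $g_\xi$ of the type of the $\xi$-th element $a_\xi$ of $N'$ over $g_\xi[N_\xi]$; amalgamating then lets you choose $Q_{\xi+1}\gea Q_\xi$ and $g_{\xi+1}\supseteq g_\xi$ with $a_\xi\in g_{\xi+1}[N_{\xi+1}]$. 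At the end $|N'|\subseteq |g[M^{*}]|$, coherence gives $N'\lea g[M^{*}]$, and $g^{-1}\upharpoonright N'$ is the desired embedding over $M$. (In the classes studied in this paper, which admit intersections, your forward approach can be rescued by Fact \ref{tp-int}, since equality of types is then witnessed by an isomorphism of closures; but the Fact is stated for arbitrary AECs.)
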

\begin{proof}
The forward direction is claimed in \cite{sh600} and proven in \cite[2.9]{tamenessone}. The backward direction is straightforward.
\end{proof}

As mentioned in the introduction, the uniqueness of limit models of the same cardinality is a very interesting assertion. When the lengths of the cofinalities of the chains are equal, an easy back-and-forth argument gives the following.

\begin{fact}\label{scof} Let $\K$ be an AEC with joint embedding, amalgamation and no maximal models.
If $M$ is a $(\lambda,\alpha)$-limit model and $N$ is a $(\lambda, \beta)$-limit model such that $cf(\alpha)=cf(\beta)$, then $M\cong N$. 
\end{fact}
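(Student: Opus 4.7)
The plan is a standard back-and-forth argument, in three steps.

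First, reduce to the case $\alpha=\beta=\theta$, where $\theta = cf(\alpha) = cf(\beta)$. Given witnessing continuous chains $\langle M_i : i<\alpha\rangle$ and $\langle N_i : i<\beta\rangle$, pick continuous cofinal subsequences of order-type $\theta$; by Fact \ref{simple}(1) each successor of a subchain is still universal over its predecessor (universality is preserved on passing to a larger extension), so the subchains witness that $M$ and $N$ are $(\lambda,\theta)$-limit models. Thus we may assume $M=\bigcup_{i<\theta}M_i$ and $N=\bigcup_{i<\theta}N_i$ with $M_{i+1}$ (resp.\ $N_{i+1}$) universal over $M_i$ (resp.\ $N_i$).

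Next, construct by induction on $i<\theta$ increasing strong embeddings $f_i:M_i\to N$ together with strictly increasing indices $j(i)<\theta$ such that $f_i(M_i)\lea N_{j(i)}$. The base case uses joint embedding, amalgamation, and the L\"owenheim--Skolem--Tarski axiom to produce $f_0$ landing in some $N_{j(0)}$. At successor stages I alternate a \emph{forth} move and a \emph{back} move. The forth move amalgamates $M_{i+1}$ with $N_{j(i)}$ over $f_i$ to get $Q\in\K_\lambda$ with $N_{j(i)}\lea Q$, then uses universality of $N_{j(i)+1}$ over $N_{j(i)}$ to embed $Q$ into $N_{j(i)+1}$ fixing $N_{j(i)}$; the composition yields $f_{i+1}:M_{i+1}\to N_{j(i)+1}$. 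The back move pulls $N_{j(i)+1}$ back along $f_i$: choose an abstract $M^*\in\K_\lambda$ with $M_i\lea M^*$ and an isomorphism $\sigma:M^*\cong N_{j(i)+1}$ extending $f_i$; by universality of $M_{i+1}$ over $M_i$ embed $M^*$ into $M_{i+1}$ fixing $M_i$; finally, extend the resulting partial map to all of $M_{i+1}$ by another forth-style application, landing in some $N_{j(i+1)}$. At limit stages, take unions and set $j(i)=\sup_{i'<i}j(i')$; smoothness ensures the union remains a strong embedding into $N$.

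Setting $f:=\bigcup_{i<\theta}f_i:M\to N$, smoothness again gives that $f$ is a strong embedding. The back moves guarantee $N_{j(i)+1}\subseteq f(M)$ for cofinally many $i<\theta$, so $f(M)=\bigcup_{k<\theta}N_k=N$ and $f$ is the desired isomorphism. The main delicate point is organizing the back move: after placing an isomorphic copy of $N_{j(i)+1}$ inside $M_{i+1}$, one must verify that the further forth-style extension of $f_{i+1}$ to all of $M_{i+1}$ is coherent and still lies in $N$; once that bookkeeping is in place, every remaining step is a routine invocation of universality, amalgamation, and smoothness.
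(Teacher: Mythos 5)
Your argument is correct and is exactly the ``easy back-and-forth argument'' the paper invokes without writing out: reduce to chains of length $\theta=cf(\alpha)=cf(\beta)$ via Fact \ref{simple}(1), then interleave forth moves (amalgamation plus universality of $N_{j(i)+1}$ over $N_{j(i)}$) with back moves that place a copy of $N_{j(i)+1}$ inside $M_{i+1}$ over $M_i$, taking unions at limits and using smoothness. The bookkeeping you flag at the end is handled precisely as you describe, so there is nothing to add.
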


The question of uniqueness is intriguing when the cofinalities of the lengths of the chains are different. This question has been studied in many papers, among them \cite{shvi}, \cite{van06}, \cite{grvavi}, \cite{extendingframes}, \cite{vand}, \cite{bovan}, \cite{viza} and \cite{vasey18}.

\section{Abelian groups}

In this third section, we study limit models in the class of abelian groups with the subgroup relation. Since this class was studied in great detail in \cite{grp} and \cite{baldwine}, the section will be short and we will cite several times. 

\begin{defin}
Let $\K^{ab}=(K^{ab}, \leq)$ where $K^{ab}$ is the class of abelian groups in the language $L_{ab}=\{0\} \cup \{ +,-\}$ and $\leq$ is the subgroup relation, which is the same as the substructure relation in $L_{ab}$.
\end{defin}

\begin{fact} \label{basic-ab}\
\begin{enumerate}
\item $\K^{ab}$ is an AEC with $\LS(\K^{ab})=\aleph_0$. 
\item $\K^{ab}$ admits intersections. 
\item $\K^{ab}$ has joint embedding, amalgamation and no maximal models.
\item $\K^{ab}$ is a universal class. 
\item $\K^{ab}$ is $(< \aleph_0)$-tame and short.
\end{enumerate}
\end{fact}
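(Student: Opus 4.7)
The plan is to verify the five items in turn; I treat (1) and (4) first since universality will feed into the rest.

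First, $\K^{ab}$ is axiomatized in the finite language $L_{ab} = \{0, +, -\}$ by universal sentences (associativity, commutativity, $x + 0 = x$, $x + (-x) = 0$), and the subgroup relation on $L_{ab}$-structures coincides with the $L_{ab}$-substructure relation. This gives (4). It is standard (see \cite[Chapter 4]{baldwinbook09}) that every universal class in a language $L$ is an AEC with L\"owenheim-Skolem-Tarski number $|L| + \aleph_0$; since $L_{ab}$ is finite, $\LS(\K^{ab}) = \aleph_0$, proving (1). For (2), any intersection of subgroups of $N$ is again a subgroup of $N$, so for every $A \subseteq |N|$ the set $\bigcap\{M \lea N : A \subseteq |M|\}$ is the subgroup $\langle A \rangle^{N}$ generated by $A$, and in particular lies in $\K^{ab}$.

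Item (3) is handled by standard algebraic constructions. Joint embedding and the absence of maximal models are witnessed by the external direct sum: $G$ and $H$ embed as direct summands into $G \oplus H$, and $G$ is a proper subgroup of $G \oplus \mathbb{Z}$. Amalgamation is given by the pushout in the category of abelian groups: given embeddings $f_i : G \to G_i$ for $i = 1, 2$, form $(G_1 \oplus G_2)/\{(f_1(g), -f_2(g)) : g \in G\}$; the natural maps from $G_1$ and $G_2$ into this quotient are injective (since the $f_i$ are embeddings) and agree on $G$ by construction.

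The only point requiring more than a routine check is (5). Using Fact \ref{tp-int} together with the description of closures obtained in (2), $\gtp(\ba_1/A; N_1) = \gtp(\ba_2/A; N_2)$ holds if and only if there is an isomorphism $\langle A \cup \ba_1\rangle^{N_1} \cong_{A} \langle A \cup \ba_2 \rangle^{N_2}$ sending $\ba_1$ to $\ba_2$. As is standard in a universal class, this last condition is equivalent to $\ba_1$ and $\ba_2$ satisfying the same quantifier-free $L_{ab}$-formulas over $A$: $\K$-embeddings preserve quantifier-free formulas for one direction, while for the other direction matching qf-types let one define the isomorphism termwise on the finitely-generated substructures $\langle A \cup \ba_i\rangle^{N_i}$. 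Since every quantifier-free formula involves only finitely many parameters and finitely many free variables, distinct Galois-types over $A$ must already differ over some finite $B \subseteq A$ (yielding $(<\aleph_0)$-tameness), and distinct Galois-types of $\alpha$-tuples over $\emptyset$ must already differ on some finite subset of coordinates (yielding $(<\aleph_0)$-shortness). The principal conceptual step is the reduction in (5) from Galois-types to quantifier-free syntactic types for universal classes admitting intersections; this is well-established folklore (see \cite{vaseyu}), so no serious obstacle is anticipated.
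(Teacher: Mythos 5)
Your proposal is correct and follows essentially the same route as the paper: the paper derives (4) from the universal axiomatizability of abelian groups in $L_{ab}$ (stressing the presence of ``$-$''), cites \cite[3.3]{grp} for (1) and (3), notes (2) is clear, and obtains (5) from (4) via \cite[3.7, 3.8]{vaseyu}. You simply unpack those citations inline — the standard fact that universal classes are AECs, the direct-sum/pushout constructions for (3), and the reduction of Galois-types to quantifier-free types for (5) — all of which is accurate.
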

\begin{proof}
(1) and (3) are shown in \cite[3.3]{grp} and (2) is clear, so we show the last two assertions:
\begin{enumerate}
 \skipitems{3}
    \item It follows from the fact that $K^{ab}$ is axiomatizable by a set of universal first-order sentences in the language $L_{ab}=\{0\} \cup \{ +,-\}$. It is fundamental that we have ``$-$" in the language.
    \item It follows from (4) and \cite[3.7, 3.8]{vaseyu}.
  \end{enumerate} \end{proof}

The following fact is implied by \cite[3.4, 3.5]{grp}.
\begin{fact}\label{ab-st}
Let $G \leq H$ and $a, b \in H$, the following are equivalent:
\begin{enumerate}
\item There exists $f: cl^H_{\K^{ab}}(G \cup \{a\})\cong_{G} cl^H_{\K^{ab}}(G \cup \{b\})$ such that $f(a)=b$.
\item
 \begin{itemize}
\item $\langle a \rangle \cap  G = 0 = \langle b \rangle \cap  G$, or
\item There are $ n \in \mathbb{N}$ and  $g^* \in G$ such that  $na=g^*=nb$ and $ma, mb \notin G$ for all $m < n$.  
\end{itemize}
\end{enumerate}

In particular, $\K^{ab}$ is $\lambda$-Galois-stable for every $\lambda$ infinite cardinal.
\end{fact}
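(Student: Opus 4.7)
The plan is to describe the closures $cl^H_{\K^{ab}}(G\cup\{a\})$ and $cl^H_{\K^{ab}}(G\cup\{b\})$ explicitly, argue the equivalence directly, and then deduce $\lambda$-Galois-stability from Fact \ref{tp-int}. Since $\K^{ab}$ admits intersections and any intersection of subgroups is a subgroup, $cl^H_{\K^{ab}}(G\cup\{a\})$ is simply the subgroup $G+\langle a\rangle$ of $H$, and similarly $cl^H_{\K^{ab}}(G\cup\{b\})=G+\langle b\rangle$. So the question reduces to deciding when the assignment $g+ka\mapsto g+kb$ extends to a $G$-fixing group isomorphism.

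For $(1)\Rightarrow(2)$, assume $f$ is such an isomorphism. For each $m\in\mathbb{Z}$, if $ma\in G$ then $f(ma)=ma$ on the one hand and $f(ma)=mb$ on the other, so $mb\in G$; the converse follows by applying $f^{-1}$. Hence $\{m\in\mathbb{Z}:ma\in G\}=\{m\in\mathbb{Z}:mb\in G\}$. If this set is $\{0\}$ we are in the first subcase of (2); otherwise it equals $n\mathbb{Z}$ for the least positive $n$ in it, and then $na=f(na)=nb=:g^*\in G$, with $ma,mb\notin G$ for $0<m<n$ by the minimality of $n$.

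For $(2)\Rightarrow(1)$, in the first subcase both sums $G+\langle a\rangle$ and $G+\langle b\rangle$ are internal direct sums with $\langle a\rangle,\langle b\rangle\cong\mathbb{Z}$, and the obvious map $g+ka\mapsto g+kb$ is the desired isomorphism. In the second subcase, division with remainder combined with the minimality of $n$ yields $ka\in G$ iff $n\mid k$, and for $k=jn$ one has $ka=jg^*$; exactly the same holds for $b$. Hence both closures are canonically isomorphic to $(G\oplus\mathbb{Z})/\langle(g^*,-n)\rangle$ via $(g,k)\mapsto g+ka$ (respectively $(g,k)\mapsto g+kb$) in a way matching $a$ with $b$, so $g+ka\mapsto g+kb$ is well-defined and yields the required isomorphism.

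For the ``in particular'' clause, Fact \ref{tp-int} identifies Galois-types over $G$ with $G$-isomorphism classes of the closures $cl^H(G\cup\{a\})$ that pin $a$, so by the equivalence just proven each such type is determined either by the information that $\langle a\rangle\cap G=0$ (a single ``free'' type) or by the unique pair $(n,g^*)\in\mathbb{N}_{>0}\times G$ with $na=g^*$ and $n$ minimal. For $\|G\|=\lambda\geq\aleph_0$ this yields at most $\aleph_0\cdot\lambda=\lambda$ one-types over $G$, establishing $\lambda$-Galois-stability. The main technical point is the well-definedness of the isomorphism in $(2)\Rightarrow(1)$: it hinges on knowing all relations between $G$ and $a$, which is exactly what the minimality condition on $n$ provides.
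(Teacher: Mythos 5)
The paper does not actually prove this statement --- it is quoted as a consequence of \cite[3.4, 3.5]{grp} --- so your direct argument is the only proof on the table, and most of it is right: the identification $cl^H_{\K^{ab}}(G\cup\{a\})=G+\langle a\rangle$, the extraction in $(1)\Rightarrow(2)$ of the invariant $\{m\in\mathbb{Z}: ma\in G\}$ together with the common value $na=nb=g^*$, the presentation of the closure as $(G\oplus\mathbb{Z})/\langle(g^*,-n)\rangle$ in the second subcase (whose well-definedness check is exactly the right technical point), and the count of at most $\aleph_0\cdot\lambda=\lambda$ types via Fact \ref{tp-int} are all correct.

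There is, however, a genuine gap in the first subcase of $(2)\Rightarrow(1)$: from $\langle a\rangle\cap G=0$ you conclude $\langle a\rangle\cong\mathbb{Z}$, which requires $a$ to have infinite order, and $\K^{ab}$ contains torsion. Indeed, read literally the equivalence fails there: take $G=0$, $H=\mathbb{Z}/2\oplus\mathbb{Z}/3$, $a$ of order $2$ and $b$ of order $3$; then $\langle a\rangle\cap G=0=\langle b\rangle\cap G$, but no isomorphism $\langle a\rangle\cong\langle b\rangle$ sends $a$ to $b$. The repair is to read the first bullet as ``$ma\notin G$ for every nonzero integer $m$'' (which forces $a,b$ to have infinite order), and to observe that torsion elements are always governed by the second bullet, since $o(a)\cdot a=0\in G$ guarantees a least positive $n$ with $na\in G$; your own $(1)\Rightarrow(2)$ analysis via the set $\{m: ma\in G\}$ already routes torsion elements there correctly. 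With that reading your argument is complete, and the imprecision does not affect the stability count, since the invariants ``free'' or $(n,g^*)\in\mathbb{N}_{>0}\times G$ still number at most $\lambda$. You should state this caveat explicitly rather than silently writing $\langle a\rangle\cong\mathbb{Z}$.
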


\begin{remark}
 Since $\K^{ab}$ has joint embedding, amalgamation and no maximal models, $\K^{ab}$ has  limit models in every infinite cardinal by Fact \ref{ab-st} and Fact \ref{existence}.

\end{remark}

Recall that a group $G$ is \emph{divisible} if for each $g \in G$ and $n \in \mathbb{N}$, there is $h\in G$ such that $nh=g$. In the next lemma we show that limit models in $\K^{ab}$ are divisible groups.

\begin{lemma}
If $G$ is a $(\lambda, \alpha)$-limit model, then $G$ is a divisible group.
\end{lemma}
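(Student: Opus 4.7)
My plan is to exploit the universality that is built into the chain witnessing $G$ as a limit model. Divisibility is a $\forall\exists$-statement: for every $g\in G$ and every positive integer $n$, I need to produce some $h\in G$ with $nh=g$. The strategy is to observe that such an $h$ can always be found in an extension of any $M_i$ containing $g$, and then pull it into the chain using universality.

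Fix a witnessing chain $\{M_i : i<\alpha\}$ for $G$, so each $M_{i+1}$ is universal over $M_i$ and $G=\bigcup_{i<\alpha} M_i$. Given $g\in G$ and $n\in\mathbb{N}^+$, use that $\alpha$ is a limit ordinal to pick $i<\alpha$ with $g\in M_i$ and $i+1<\alpha$. Now construct an extension $N^*\in\K^{ab}_\lambda$ of $M_i$ in which $g$ has an $n$th root: the cleanest choice is to embed $M_i$ into its divisible hull $D(M_i)$, pick any $h^*\in D(M_i)$ with $nh^*=g$, and set $N^*=\langle M_i\cup\{h^*\}\rangle\leq D(M_i)$. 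Then $M_i\leq N^*$ and $\|N^*\|=\lambda$.

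By universality of $M_{i+1}$ over $M_i$, there is a $\K^{ab}$-embedding $f\colon N^*\xrightarrow[M_i]{} M_{i+1}$. Setting $h:=f(h^*)\in M_{i+1}\subseteq G$, one has $nh=nf(h^*)=f(nh^*)=f(g)=g$, because $f$ fixes $M_i$ pointwise. Since $g$ and $n$ were arbitrary, $G$ is divisible.

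The argument is almost entirely formal: the only real input is the existence of an $n$th root for $g$ in \emph{some} $\lambda$-sized extension of $M_i$, which is immediate from the fact that every abelian group embeds into a divisible group. I do not expect any genuine obstacle here; the main thing to get right is the bookkeeping that the element produced by universality actually lies in $G$ (hence the need to choose $i$ with $i+1<\alpha$, which is available because $\alpha$ is a limit ordinal).
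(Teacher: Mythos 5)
Your proof is correct and follows essentially the same route as the paper: embed $M_i$ into a divisible ($\lambda$-sized) extension containing an $n$th root of $g$, then pull that root back into $M_{i+1}$ via universality over $M_i$. The only cosmetic difference is that you pass to the subgroup generated by $M_i\cup\{h^*\}$ inside the divisible hull, whereas the paper maps the whole divisible extension $D\in\K_\lambda$ directly; both are fine.
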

\begin{proof}
Fix $\{G_i : i < \alpha\}$ a witness to the fact that $G$ is a $(\lambda, \alpha)$-limit model. Let $g \in G$ and $n \in \mathbb{N}$, we want to show that $n|g$. Since $G = \bigcup_{i < \alpha} G_i$, there is $i < \alpha$ such that $g\in G_i$. Recall that every group can be embedded as a subgroup into a divisible group (see \cite[\S4.1.4]{fuchs}), so there is $D \in \K_{\lambda}$ divisible group such that $G_i \leq D$. In particular there is $d \in D$ with $nd=g$. Since $G_{i + 1}$ is universal over $G_i$, there is $f: D \xrightarrow[G_i]{} G$. Hence $nf(d)= f(g)=g$ and $f(d) \in G$. 
\end{proof}

Using the following structure theorem for divisible groups we can characterize the limit models of $\K^{ab}$. A proof of this fact appears in \cite[\S 4.3.1]{fuchs}.

\begin{fact}
If $G$ is a divisible group, then we have that:
\[G \cong (\oplus_{\kappa}\mathbb{Q}) \oplus \oplus_{p \text{ prime }} (\oplus_{\kappa_p} \mathbb{Z}(p^\infty))\]
where the cardinal numbers $\kappa$, $\kappa_p$ (for all $p$ prime number) correspond to the ranks $rk_0(G)$, $rk_p(G)$  (for all $p$ prime number)\footnote{The $rk_0(G)$ is the cardinality of a maximal linearly independent subset of elements of infinite order in $G$ and $rk_p(G)$ is the cardinality of a maximal linearly independent subset of elements of order a power of $p$ in $G$. The notion of linear independence in the context of abelian groups differs slightly from that of vector spaces, the reader can consult \cite[p. 91]{fuchs} for the definition of linear independence in this setting.}.
\end{fact}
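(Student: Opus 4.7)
The plan is to derive this structure theorem from the fact that divisible abelian groups are precisely the injective objects in the category of abelian groups, together with a socle argument for the $p$-primary parts. The proof splits into a reduction to the torsion case and a complete analysis of divisible $p$-groups.

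First, I would show that the torsion subgroup $T(G) = \{g \in G : ng = 0 \text{ for some } n \geq 1\}$ is itself divisible: if $nt = 0$ and $m \in \mathbb{N}$, divisibility of $G$ yields $g \in G$ with $mg = t$, whence $(nm)g = 0$ and $g \in T(G)$. Since divisible abelian groups are injective, $T(G)$ is a direct summand of $G$, say $G \cong T(G) \oplus V$. The complement $V \cong G/T(G)$ is torsion-free and divisible, and any torsion-free divisible abelian group admits a natural $\mathbb{Q}$-module structure by setting $\frac{m}{n} \cdot v$ to be the unique $w$ with $nw = mv$. Hence $V$ is a $\mathbb{Q}$-vector space, giving $V \cong \oplus_\kappa \mathbb{Q}$ where $\kappa = rk_0(G)$.

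Second, I would decompose the torsion part primarily as $T(G) = \bigoplus_{p} T_p(G)$ with $T_p(G) = \{g \in G : p^n g = 0 \text{ for some } n \geq 1\}$, a standard consequence of the Chinese Remainder Theorem applied to each element's annihilator. Each $T_p(G)$ inherits divisibility from $G$.

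Third, and this is the main step, I would show $T_p(G) \cong \oplus_{\kappa_p} \mathbb{Z}(p^\infty)$. Consider the socle $S_p := \{g \in T_p(G) : pg = 0\}$ as an $\mathbb{F}_p$-vector space and fix a basis $\{e_i\}_{i \in I}$ with $|I| = \kappa_p = rk_p(G)$. For each $i$, use divisibility recursively to produce a sequence $e_i = e_i^{(1)}, e_i^{(2)}, \ldots$ with $p \cdot e_i^{(n+1)} = e_i^{(n)}$; the subgroup $P_i := \langle e_i^{(n)} : n \geq 1 \rangle$ is isomorphic to $\mathbb{Z}(p^\infty)$. The family $\{P_i\}_{i \in I}$ is independent because the socles $P_i[p] = \langle e_i \rangle$ form an internal direct sum inside $S_p$ by choice of basis, and independence lifts from the socle to the whole family. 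Finally, $\bigoplus_i P_i$ is itself divisible, hence a direct summand of $T_p(G)$; its complement $C$ is a divisible $p$-group with $C[p] = 0$, but any nonzero divisible $p$-group has nontrivial socle, forcing $C = 0$.

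The main obstacle is the last step: showing that the Prüfer subgroups built from a socle basis exhaust $T_p(G)$. The cleanest route is the injectivity/complement argument above, which reduces spanning to the triviality of a divisible $p$-group with zero socle. An alternative (more hands-on but longer) approach is to argue by induction on the $p$-height of an element, successively subtracting off contributions from the $P_i$; this requires careful bookkeeping to ensure the subtractions terminate inside the constructed direct sum.
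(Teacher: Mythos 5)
Your proof is correct and is essentially the standard textbook argument: the paper does not prove this Fact at all but simply cites \cite[\S 4.3.1]{fuchs}, and your route (split off the torsion subgroup via injectivity, identify the torsion-free part as a $\mathbb{Q}$-vector space, decompose the torsion part into $p$-primary components, and build Pr\"{u}fer subgroups over a basis of each socle, checking independence on socles and exhaustion via the complement having trivial socle) is precisely the proof found there. No gaps worth flagging; the only steps left implicit (independence of $p$-subgroups can be tested on socles, and $(\oplus_i P_i)[p]$ already equals the full socle of $T_p(G)$ so the complement vanishes) are standard and correctly invoked.
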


From it we are able to show our first theorem.

\begin{theorem}\label{abelian}
If $G$ is a $(\lambda, \alpha)$-limit model in $\K^{ab}$, then we have that:
\[  G \cong (\oplus_{\lambda}\mathbb{Q}) \oplus \oplus_{p \text{ prime}} (\oplus_{\lambda} \mathbb{Z}(p^\infty)).\]
\end{theorem}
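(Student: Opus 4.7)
The plan is to combine the preceding lemma (limit models in $\K^{ab}$ are divisible) with the structure theorem for divisible groups, and then pin down the cardinal invariants using the universality property of limit models from Fact \ref{simple}(2).

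More concretely, applying the preceding lemma to $G$ gives that $G$ is divisible, so the structure theorem yields
\[ G \cong (\oplus_{\kappa}\mathbb{Q}) \oplus \oplus_{p \text{ prime}} (\oplus_{\kappa_p} \mathbb{Z}(p^\infty))\]
for some cardinals $\kappa = rk_0(G)$ and $\kappa_p = rk_p(G)$. It therefore suffices to show that $\kappa = \lambda$ and $\kappa_p = \lambda$ for every prime $p$. The upper bounds are immediate from $\|G\| = \lambda$: each summand has cardinality $\kappa + \aleph_0$ or $\kappa_p + \aleph_0$, and $\lambda$ is infinite (since $\LS(\K^{ab}) = \aleph_0$), so $\kappa, \kappa_p \leq \lambda$.

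For the lower bounds I would use the universality enjoyed by limit models. Since $\K^{ab}$ has joint embedding and amalgamation (Fact \ref{basic-ab}(3)), Fact \ref{simple}(2) applies: every group in $\K^{ab}_\lambda$ admits a $\K^{ab}$-embedding into $G$. Taking the test groups $H_0 := \oplus_\lambda \mathbb{Q}$ and, for each prime $p$, $H_p := \oplus_\lambda \mathbb{Z}(p^\infty)$, each of which has cardinality $\lambda$, we obtain subgroups of $G$ isomorphic to $H_0$ and to $H_p$. The $\lambda$ summands of $H_0$ provide $\lambda$ linearly independent elements of infinite order inside $G$, forcing $\kappa = rk_0(G) \geq \lambda$; analogously, the summands of $H_p$ provide $\lambda$ linearly independent elements of $p$-power order, so $\kappa_p = rk_p(G) \geq \lambda$. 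Combined with the upper bounds, this gives $\kappa = \kappa_p = \lambda$, completing the proof.

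There is no serious obstacle: the nontrivial content (divisibility of limit models, and the universality of limit models among groups of size $\lambda$) has already been established, and the computation of ranks via independent subsets of an embedded subgroup is standard. The only thing to be a little careful about is the definition of $rk_p$ recalled in the footnote — monotonicity under inclusion is what is being used, which holds because linear independence in the sense of abelian groups is preserved when passing to a larger group.
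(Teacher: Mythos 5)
Your proposal is correct and follows essentially the same route as the paper: invoke the divisibility lemma, apply the structure theorem for divisible groups, and compute the ranks $rk_0(G)$ and $rk_p(G)$ by embedding large test groups into $G$ via universality. The only cosmetic difference is that the paper embeds $G_0 \oplus (\oplus_{\lambda}\mathbb{Q}) \oplus \oplus_{p}(\oplus_{\lambda}\mathbb{Z}(p^\infty))$ over $G_0$ in a single step using that $G_1$ is universal over $G_0$, while you invoke Fact \ref{simple}(2) for each test group separately; both yield the same rank computation.
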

\begin{proof}
Fix $\{G_i : i < \alpha\}$ a witness to the fact that $G$ is a $(\lambda, \alpha)$-limit model. Observe that $G_0\leq G_0 \oplus (\oplus_{\lambda}\mathbb{Q}) \oplus \oplus_{p \text{ prime}} (\oplus_{\lambda} \mathbb{Z}(p^\infty))$, therefore there is \[f:  G_0 \oplus (\oplus_{\lambda}\mathbb{Q}) \oplus \oplus_{p \text{ prime}} (\oplus_{\lambda} \mathbb{Z}(p^\infty)) \xrightarrow[G_0]{} G.\] In particular, $rk_0(G)= \lambda$ and $rk_p(G)=\lambda$ for all $p$ prime, then by the structure theorem for divisible groups we have that  $G \cong (\oplus_{\lambda}\mathbb{Q}) \oplus \oplus_{p \text{ prime}} (\oplus_{\lambda} \mathbb{Z}(p^\infty))$.
\end{proof}

As a simple corollary we obtain the following.

\begin{cor}
$\K^{ab}$ has uniqueness of limit models for every infinite cardinal.
\end{cor}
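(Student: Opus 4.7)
The plan is to derive this corollary as an immediate consequence of Theorem \ref{abelian}. Fix an infinite cardinal $\lambda$. First I would observe that the statement is non-vacuous: by Fact \ref{basic-ab} the class $\K^{ab}$ has joint embedding, amalgamation, and no maximal models, and by Fact \ref{ab-st} it is $\lambda$-Galois-stable, so Fact \ref{existence} guarantees that $(\lambda,\alpha)$-limit models exist for every limit $\alpha<\lambda^+$.

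Next I would let $G_1$ be a $(\lambda,\alpha_1)$-limit model and $G_2$ a $(\lambda,\alpha_2)$-limit model in $\K^{ab}_\lambda$, with $\alpha_1,\alpha_2<\lambda^+$ limit ordinals (possibly of different cofinalities). By Theorem \ref{abelian}, each of $G_1$ and $G_2$ is isomorphic to
\[ (\oplus_{\lambda}\mathbb{Q}) \oplus \oplus_{p \text{ prime}} (\oplus_{\lambda} \mathbb{Z}(p^\infty)). \]
Since the right-hand side depends only on $\lambda$ and not on the length of the resolution chain, transitivity of $\cong$ gives $G_1 \cong G_2$, which is exactly uniqueness of limit models in cardinality $\lambda$.

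There is essentially no obstacle here: the work has already been done in Theorem \ref{abelian}, which produces an algebraic normal form for any limit model that depends only on the cardinality. In particular, the usual back-and-forth argument of Fact \ref{scof}, which only handles the case $\cf(\alpha_1)=\cf(\alpha_2)$, is bypassed entirely because the structure theorem gives a single isomorphism type across all possible cofinalities.
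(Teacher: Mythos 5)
Your proof is correct and is exactly the argument the paper intends: the corollary is stated as an immediate consequence of Theorem \ref{abelian}, since the structure theorem pins down a single isomorphism type depending only on $\lambda$. The extra remarks on existence via Facts \ref{basic-ab}, \ref{ab-st}, and \ref{existence} are accurate but not needed for the uniqueness statement itself.
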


\begin{remark}
Fact \ref{ab-st} and  Fact \ref{basic-ab}.(3) together with \cite[3.7, 11.3, 11.7]{vaseyt} imply that $\K^{ab}$ has uniqueness of limit models above $\beth_{(2^{\aleph_0})^+}$, so the result of the above corollary is only new for small cardinals.
\end{remark}

\section{Torsion-free abelian groups}

In this fourth section, we study the class of torsion-free abelian groups with the pure subgroup relation. In the first half of the section we examine basic properties of the class while in the second one we look at limit models. As we will see in this case the theory becomes more interesting. 

\begin{defin}
Let $\K^{tf}=(K^{tf}, \leq_p)$ where $K^{tf}$ is the class of torsion-free abelian groups in the language $L_{ab}=\{0\} \cup \{ +,-\}$ and $\leq_p$ is the pure subgroup relation. Recall that $H$ is a \emph{pure subgroup} of $G$ if for every $n \in \mathbb{N}$ it holds that $nG \cap H  = nH$.
\end{defin}

\subsection{Basic properties} Before analyzing the set of limit models, we obtain a few basic properties for the class of torsion-free abelian groups. As for abelian groups the basic properties of torsion-free abelian groups were studied in \cite{grp} and \cite{baldwine}.
\begin{fact} \label{basictf}\
\begin{enumerate}
\item $\K^{tf}$ is an AEC with $\LS(\K^{tf})=\aleph_0$.
\item$\K^{tf}$ admits intersections. 
\item $\K^{tf}$ has joint embedding, amalgamation and no maximal models.
\end{enumerate}
\end{fact}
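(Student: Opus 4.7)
For part (1), my plan is to check the axioms of Definition \ref{aec-def} one by one. Most are immediate: torsion-freeness is axiomatized by the abelian-group axioms together with the universal sentences $\forall x\,(nx=0 \to x=0)$ for each $n \geq 1$, hence is preserved under substructures, isomorphisms and unions of chains. The real content lies in the pure-subgroup relation. To verify Tarski-Vaught for an increasing chain $\{G_i : i<\delta\}$ of pure inclusions, I would argue that each $G_i$ is pure in $\bigcup_{j<\delta} G_j$: if $g \in G_i$ and $g = nh$ for some $h$ in the union, then $h \in G_j$ for some $j \geq i$, and purity of $G_i$ in $G_j$ yields $h' \in G_i$ with $nh' = g$. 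Smoothness is handled by the same computation carried out inside the ambient model $N$. The L\"owenheim-Skolem-Tarski axiom with $\LS(\K^{tf}) = \aleph_0$ then falls out immediately once (2) is in hand, since the pure closure of a countable subset is countable.

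Part (2) is the short observation that an intersection of pure subgroups of a torsion-free group is pure. Given $\{H_i : i \in I\}$ pure in a torsion-free $G$, set $H := \bigcap_i H_i$ and suppose $h \in H$ with $h = ng$ for some $g \in G$ and $n \geq 1$. Purity of each $H_i$ provides $h_i \in H_i$ with $nh_i = h$; but for any $i, j$ we have $n(h_i - h_j) = 0$, so torsion-freeness of $G$ forces $h_i = h_j$. The common value lies in $H$ and witnesses $h \in nH$.

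For part (3), no maximal models and joint embedding both fall out of the observations $G \leq_p G \oplus \mathbb{Z}$ and $G_1, G_2 \leq_p G_1 \oplus G_2$, since in each case the relevant quotient is torsion-free. The main obstacle is amalgamation, which I plan to handle by amalgamating first inside divisible hulls. Given $G_1 \leq_p G_2$ and $G_1 \leq_p G_3$, let $V_i := \mathbb{Q} G_i$ denote the divisible hull of $G_i$. Purity of $G_1$ in $G_2$ combined with torsion-freeness gives $V_1 \cap G_2 = G_1$ inside $V_2$, because any $x \in V_1 \cap G_2$ satisfies $mx \in G_1$ for some $m \geq 1$, and $G_2/G_1$ being torsion-free then forces $x \in G_1$; symmetrically $V_1 \cap G_3 = G_1$ inside $V_3$. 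I would then form the $\mathbb{Q}$-vector-space pushout $V := V_2 \oplus_{V_1} V_3$, in which $V_2 \cap V_3 = V_1$, and set $G := G_2 + G_3 \subseteq V$. The identity $V_2 \cap V_3 = V_1$ forces $G_2 \cap G_3 = G_1$ in $V$, so by the second isomorphism theorem $G/G_2 \cong G_3/G_1$, which is torsion-free by purity of $G_1$ in $G_3$; hence $G_2 \leq_p G$, and symmetrically $G_3 \leq_p G$, completing the amalgamation.
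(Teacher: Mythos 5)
Your proposal is correct, but it is worth noting that the paper does not actually prove this fact at all: it simply cites \cite[3.3]{grp}, \cite{baldwine} for (1) and (3) and \cite[\S 5.1]{fuchs} for (2). You supply a genuine self-contained argument, and all the key steps check out: the Tarski--Vaught and smoothness verifications reduce, as you say, to the observation that a divisibility witness for $g \in G_i$ lands in some $G_j$ with $j \ge i$; the intersection argument correctly exploits uniqueness of divisors in torsion-free groups; and the amalgamation via divisible hulls is sound, since $V_1 \cap G_2 = G_1$ follows from torsion-freeness of $G_2/G_1$ (equivalent to purity here), the vector-space pushout satisfies $V_2 \cap V_3 = V_1$, and the criterion ``$H \le G$ with $G/H$ torsion-free implies $H \leq_p G$'' closes the argument. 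Your amalgam $G_2 + G_3 \subseteq V_2 \oplus_{V_1} V_3$ is in fact isomorphic to the pushout $(G_2 \oplus G_3)/\{(g,-g) : g \in G_1\}$ used in \cite[3.27]{grp} and reused by the paper in its Section 5 amalgamation lemma for finitely Butler groups; the divisible-hull detour buys you a transparent verification of $G_2 \cap G_3 = G_1$ and of torsion-freeness of the quotients, at the cost of introducing the auxiliary vector spaces. One small imprecision: for the L\"owenheim--Skolem--Tarski axiom you should state the bound for arbitrary $A$, namely $|cl^M(A)| \le |A| + \aleph_0$ (which follows from the explicit description of the closure as in Proposition \ref{cl}, using again uniqueness of divisors), not only for countable $A$.
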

 \begin{proof}
(1) and (3) are shown in \cite[3.3]{grp} and \cite{baldwine}  and (2) is known to hold (an argument for this is given in \cite[\S 5.1]{fuchs}).
\end{proof}

The following proposition characterizes the closure operator in $\K^{tf}$, since the proof is a straightforward induction we omit it.
 
\begin{prop}\label{cl}
If $A \subseteq H $, then $cl^{H}_{\K^{tf}}(A)=\bigcup_{n < \omega} A_n$ where:
\begin{itemize}
\item $A_0=A$.
\item $A_{2k+1}=\{-h  : h \in A_{2k} \} \cup \{ \Sigma_{i=0}^{n} h_i : h_0,...,h_n \in A_{2k}, n \in \mathbb{N}\}$. 
\item $A_{2k+2}=\{ h \in H : \text{ there are } h^* \in A_{2k +1} \text{ and } n\in \mathbb{N} \text{ s.t. } nh=h^*\}$.  
\end{itemize}
\end{prop}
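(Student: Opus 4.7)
The plan is to prove the two inclusions of $B := \bigcup_{n<\omega} A_n$ with $cl^H_{\K^{tf}}(A)$ separately, the first by induction on $n$ and the second by direct verification that $B$ is a pure subgroup.

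For $B \subseteq cl^H_{\K^{tf}}(A)$, I would fix any pure subgroup $M \leq_p H$ containing $A$ and show by induction on $n$ that $A_n \subseteq M$; taking the intersection over all such $M$ then gives the desired inclusion. The base $A_0 = A \subseteq M$ is by assumption. The odd step is immediate because $M$ is a subgroup, hence closed under negation and finite sums. The even step is the only nontrivial point: if $h \in A_{2k+2}$, there are $n \in \mathbb{N}$ and $h^* \in A_{2k+1} \subseteq M$ with $nh = h^*$; by purity $h^* \in nH \cap M = nM$, so there is $h' \in M$ with $nh' = h^* = nh$, and torsion-freeness of $H$ forces $h = h'$, so $h \in M$.

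For the reverse inclusion $cl^H_{\K^{tf}}(A) \subseteq B$, it suffices to check that $B$ is itself a pure subgroup of $H$ containing $A$, since then minimality of the closure does the rest. Observing that the chain $A_0 \subseteq A_1 \subseteq A_2 \subseteq \cdots$ is increasing (using the single-element sum in the odd step and the case $n = 1$ in the even step), any two elements of $B$ lie in a common $A_{2k}$, so their sum and negations lie in $A_{2k+1} \subseteq B$; this, together with $0 \in B$ (either as $h - h$ for $h \in A$, or by handling the degenerate case $A = \emptyset$ directly), shows $B$ is a subgroup. Purity then drops out of the even step: if $nh \in B$ for some $h \in H$ and $n \geq 1$, pick $k$ with $nh \in A_{2k+1}$, and the definition of $A_{2k+2}$ places $h$ in $A_{2k+2} \subseteq B$.

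The one mildly delicate point, which is why the statement is phrased with alternating odd/even stages, is that purity of $M$ only lets us recover $h$ from $nh$ once we also use that $H$ is torsion-free; everything else is the mechanical induction alluded to in the statement.
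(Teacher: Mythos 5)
The paper omits the proof of this proposition, describing it only as ``a straightforward induction''; your double-inclusion argument --- showing each $A_n$ lands inside every pure subgroup containing $A$ via purity plus torsion-freeness, and that $\bigcup_{n<\omega} A_n$ is itself a pure subgroup containing $A$ --- is exactly that argument, and it is correct. The only caveats concern the statement rather than your proof (one must read $n\geq 1$ in the even stage and handle $A=\emptyset$ by convention), and you already flag both.
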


Recall the following definition from  \cite[3.1]{vaseyu}.

\begin{defin}
$\K$ is a \emph{pseudo-universal class} if it admits intersections and for any $N_1, N_2\in K$ and $\bar{a}_1 \in N_1$, $\bar{a}_2 \in N_2$, if $\gtp(\bar{a}_1/\emptyset ; N_1)= \gtp(\bar{a}_2/ \emptyset; N_2)$ and $f,g: cl^{N_1}(\bar{a}_1) \cong  cl^{N_2}(\bar{a}_2)$ are such that $f(\bar{a}_1)=g(\bar{a}_1)=\bar{a}_2$, then $f=g$. 
\end{defin}

The reason pseudo-universal classes will be of interest to us is due to the following statement showed in  \cite[3.7]{vaseyu}.

\begin{fact}\label{puni}
If $\K$ is a pseudo-universal class, then $\K$ is $(<\aleph_0)$-tame and short.
\end{fact}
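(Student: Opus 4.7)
The plan is to exploit that in a class admitting intersections Galois-types are fully encoded by isomorphism types of closures (Fact \ref{tp-int}), and that the pseudo-universality clause forces such isomorphisms to be \emph{canonical}, hence to glue together from their restrictions to finite subsets. I will prove $(<\aleph_0)$-shortness first and then deduce $(<\aleph_0)$-tameness from it by a standard reparametrization.

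First I would establish the technical lemma that in any AEC admitting intersections, closures are finitary in the sense that $cl^N(A)=\bigcup\{cl^N(B):B\subseteq A\text{ finite}\}$. The right-hand side is a $\lea$-directed union of $\lea$-substructures of $N$, hence itself a $\lea$-substructure of $N$ containing $A$, and by minimality of $cl^N(A)$ the two sides agree. I would also note the companion fact that closure commutes with embeddings: if $f:N_1\cong N_2'\lea N_2$ and $A\subseteq N_1$, then $f[cl^{N_1}(A)]=cl^{N_2}(f(A))$, which uses only coherence and minimality.

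For shortness, suppose $M,N\in K$ and $\bar a\in M^\alpha$, $\bar b\in N^\alpha$ satisfy $\gtp(\bar a\upp_I/\emptyset;M)=\gtp(\bar b\upp_I/\emptyset;N)$ for every finite $I\subseteq \alpha$. By Fact \ref{tp-int}, for each such $I$ there is an isomorphism $f_I:cl^M(\bar a\upp_I)\cong cl^N(\bar b\upp_I)$ sending $\bar a\upp_I$ to $\bar b\upp_I$. For $I\subseteq J$ finite, the restriction $f_J\upp cl^M(\bar a\upp_I)$ lands in $cl^{cl^N(\bar b\upp_J)}(\bar b\upp_I)=cl^N(\bar b\upp_I)$ by the closure-commutation lemma, and sends $\bar a\upp_I$ to $\bar b\upp_I$; the pseudo-universality clause then forces $f_J\upp cl^M(\bar a\upp_I)=f_I$. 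The family $(f_I)_{I\text{ finite}}$ is therefore coherent, and by the finitary-closure lemma its union is a well-defined isomorphism $f:cl^M(\bar a)\cong cl^N(\bar b)$ with $f(\bar a)=\bar b$. Applying Fact \ref{tp-int} in the other direction yields $\gtp(\bar a/\emptyset;M)=\gtp(\bar b/\emptyset;N)$, proving $(<\aleph_0)$-shortness.

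For tameness, suppose $p\neq q$ in $\gS(M)$, realized by $a$ in $N_1$ and $b$ in $N_2$ respectively over $M$. Enumerate $|M|=\{m_i:i<\|M\|\}$ and form $\bar a=(a)^\frown\seq{m_i}_{i<\|M\|}$ and $\bar b=(b)^\frown\seq{m_i}_{i<\|M\|}$ (sitting in $N_1$ and $N_2$). If $\gtp(\bar a/\emptyset;N_1)=\gtp(\bar b/\emptyset;N_2)$, the witnessing $\K$-embeddings would fix $M$ pointwise and send $a$ to $b$, contradicting $p\neq q$. By the shortness already proved, a finite index set $I$ witnesses inequality; letting $A\subseteq M$ be the finite set of $m_i$'s occurring in $I$, a direct chase shows $p\upp_A\neq q\upp_A$, giving $(<\aleph_0)$-tameness.

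The main obstacle is the gluing step: I must know that any two isomorphisms $f_I$ on finite-tuple closures that agree on the distinguished tuple really do agree globally, which is exactly the content of the pseudo-universality axiom; without that rigidity the $f_I$'s could branch and no compatible limit would exist. The finitary-closure lemma is the other non-cosmetic ingredient, but it is a standard consequence of Tarski--Vaught for directed unions.
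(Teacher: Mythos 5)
Your proof is correct: the finitary-closure lemma, the commutation of closures with $\K$-embeddings, and the rigidity supplied by pseudo-universality are exactly what is needed to make the family $(f_I)_{I \text{ finite}}$ coherent and glue it to an isomorphism $cl^M(\bar a)\cong cl^N(\bar b)$, and the reduction of $(<\aleph_0)$-tameness to $(<\aleph_0)$-shortness by appending an enumeration of $M$ to the realizing elements is sound. Note that the paper gives no proof of this fact, only the citation to \cite[3.7]{vaseyu}; your argument is essentially the one found there.
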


With this let us prove the following lemma.

\begin{lemma}\label{tflocal}
$\K^{tf}$ is a pseudo-universal class. In particular, $\K^{tf}$ is $(<\aleph_0)$-tame and short.
\end{lemma}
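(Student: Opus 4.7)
The plan is to verify the two defining conditions of a pseudo-universal class. Admitting intersections is already in Fact \ref{basictf}.(2), so the content lies in the uniqueness-of-isomorphism clause. The ``in particular'' statement is then immediate from Fact \ref{puni}.

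For uniqueness, I would fix $N_1, N_2 \in K^{tf}$, tuples $\bar{a}_1 \in N_1$, $\bar{a}_2 \in N_2$ with equal Galois-types over $\emptyset$, and two isomorphisms $f, g: cl^{N_1}(\bar{a}_1) \cong cl^{N_2}(\bar{a}_2)$ sending $\bar{a}_1$ to $\bar{a}_2$. Using Proposition \ref{cl}, write $cl^{N_1}(\bar{a}_1) = \bigcup_{n < \omega} A_n$ with $A_0 = \bar{a}_1$. I would then prove $f \upharpoonright A_n = g \upharpoonright A_n$ by induction on $n$.

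The base case $n=0$ is by hypothesis. For the odd step, the elements of $A_{2k+1}$ are finite sums and negations of elements of $A_{2k}$; since $f$ and $g$ are both group homomorphisms, agreement on $A_{2k}$ transfers to $A_{2k+1}$. For the even step, any $h \in A_{2k+2}$ satisfies $nh = h^*$ for some $n \in \mathbb{N}$ and $h^* \in A_{2k+1}$. By the inductive hypothesis applied to $h^*$, we get $n f(h) = f(nh) = f(h^*) = g(h^*) = g(nh) = n g(h)$, so $n(f(h) - g(h)) = 0$ in $N_2$; as $N_2$ is torsion-free, $f(h) = g(h)$. This is the only place torsion-freeness enters, and it is precisely what makes the extraction-of-roots step well-defined.

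I do not anticipate a real obstacle: the crux is recognizing that the ``divide by $n$'' clause in Proposition \ref{cl} produces a unique element in a torsion-free group, so the closure is in effect generated in a canonical way from $\bar{a}_1$. Once this is noted, the induction is routine.
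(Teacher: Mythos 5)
Your proposal is correct and follows essentially the same route as the paper: an induction on the levels $A_n$ of the closure from Proposition \ref{cl}, with the even step settled by uniqueness of division in torsion-free groups and the ``in particular'' clause deduced from Fact \ref{puni}. No gaps.
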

\begin{proof}
Let $H \in K^{tf}$, $\bar{a},\bar{b} \in H$ with $\gtp(\bar{a}/\emptyset ; H)= \gtp(\bar{b}/ \emptyset; H)$ and $f,g: cl^H_{\K^{tf}}(\bar{a}) \cong cl^{H}_{\K^{tf}}(\bar{b})$ such that $f(\bar{a})=g(\bar{a})=\bar{b}$. We show by induction that $f\rest_{A_n}= g\rest_{A_n}$ for all $n < \omega$, where the $A_n$'s are obtained by applying Proposition \ref{cl} to $cl^H_{\K^{tf}}(\bar{a})$. The base step is the hypothesis, so we do the induction step. The odd step is straightforward, so we do the even step. Let $h \in  A_{2k+2}$, by definition there is $h^* \in A_{2k + 1}$ and $n\in\mathbb{N}$ such that $nh = h^*$, then since $f, g$ are isomorphisms we have that $nf(h) = f(h^*)$ and $ng(h) = g(h^*)$. By induction hypothesis $f(h^*)=g(h^*)$, so $nf(h)=ng(h)$; using that divisors in torsion-free groups are unique, we obtain that $f(h)=g(h)$. Hence $\K^{tf}$ is pseudo-universal. The fact that $\K^{tf}$ is $(<\aleph_0)$-tame and short follows from Fact \ref{puni}. \end{proof}

In \cite[0.3]{baldwine} the following  key result is obtained.

\begin{fact}\label{st-tf}
$\K^{tf}$ is $\lambda$-Galois-stable if and only if $\lambda^{\aleph_0}=\lambda$. In particular, $\K^{tf}$ is a Galois-stable AEC.
\end{fact}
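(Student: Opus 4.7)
The plan is to prove the equivalence by separately establishing matching upper and lower bounds of $\lambda^{\aleph_0}$ on the number of Galois-types over a model of cardinality $\lambda$. Since $\K^{tf}$ admits intersections (Fact \ref{basictf}), I would use Fact \ref{tp-int} to identify the Galois-type $\gtp(a/M;N)$ with the $M$-isomorphism class of the pure closure $cl^{N}(M \cup \{a\})$ together with the marked element $a$. The key observation is that this closure is a pure extension of $M$ whose quotient has torsion-free rank at most one, so it should be parametrisable by countable data drawn from $M$ and from the countable set of primes.

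For the upper bound, I would argue as follows. When $a \notin cl^N(M)$, the quotient $cl^N(M,a)/M$ is a nontrivial rank-one torsion-free group and hence embeds into $\mathbb{Q}$; there are only $2^{\aleph_0} \leq \lambda^{\aleph_0}$ such groups up to isomorphism (classified in Baer's sense by their $p$-height sequences). Fixing such a quotient, the remaining information needed to recover $cl^N(M,a)$ up to an $M$-isomorphism fixing $a$ is, for each prime $p$, one piece of $p$-local datum: either the $p$-height of $a$ modulo $M$ is bounded (in which case one records a single best approximation from $M$), or it is unbounded, in which case one records a $p$-adic Cauchy sequence $(m^p_n)_{n<\omega}\subseteq M$ whose formal limit is $a$, modulo the usual equivalence of sequences. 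Since there are at most $|M|^{\aleph_0}=\lambda^{\aleph_0}$ such sequences for each prime and only countably many primes, the total count of types is bounded by $\lambda^{\aleph_0}$, which is $\lambda$ under the hypothesis.

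For the lower bound, I would take $M=\oplus_{\lambda}\mathbb{Z}$ with a fixed basis $(e_i)_{i<\lambda}$. For each injective $\sigma\in \lambda^{\omega}$, consider the $2$-adic Cauchy sequence $s_n^\sigma := \sum_{k<n} 2^k e_{\sigma(k)}$ and realise its formal $2$-adic limit as an element $a_\sigma$ in some pure extension (available because $\K^{tf}$ has amalgamation and no maximal models by Fact \ref{basictf}). I would then show that distinct injective $\sigma, \tau$ yield distinct Galois-types over $M$: any $M$-isomorphism taking $a_\sigma$ to $a_\tau$ must send $a_\sigma - s_n^\sigma$, which lies in $2^n \cdot cl(M,a_\sigma)$ by purity, to $a_\tau - s_n^\sigma \in 2^n \cdot cl(M,a_\tau)$; combined with $a_\tau - s_n^\tau \in 2^n \cdot cl(M,a_\tau)$ and the purity of $M$ in $cl(M,a_\tau)$, this forces $s_n^\sigma - s_n^\tau \in 2^n M$ for every $n$, and expanding in the basis forces $\sigma(k) = \tau(k)$ for all $k < n$, hence $\sigma=\tau$. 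There are $\lambda^{\aleph_0}$ injective $\sigma$, so whenever $\lambda^{\aleph_0}>\lambda$ stability must fail.

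The main obstacle I anticipate is proving the completeness of the invariants used in the upper bound: one must show that the Baer-type of the quotient together with the per-prime approximation data determines the pure closure up to $M$-isomorphism fixing $a$, not merely up to abstract group isomorphism. This will require a careful reconstruction argument combining the classical theory of heights in torsion-free abelian groups with the purity of $M$ inside $cl^N(M,a)$, so that the $p$-local Cauchy sequences genuinely recover the $M$-embedding of the marked element. Once this is in hand, the ``In particular'' clause is immediate: for any $\mu \geq \LS(\K^{tf})$ the cardinal $\lambda := \mu^{\aleph_0}$ satisfies $\lambda^{\aleph_0} = \lambda$, so $\K^{tf}$ is $\lambda$-Galois-stable.
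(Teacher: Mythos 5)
The paper offers no proof of this fact at all: it is quoted directly from Baldwin--Eklof--Trlifaj \cite[0.3]{baldwine}. So your self-contained argument cannot coincide with ``the paper's proof''; what it does is reconstruct, along essentially the natural lines, the content of the cited result. Your overall strategy is sound: since $\K^{tf}$ admits intersections (Fact \ref{basictf}), Fact \ref{tp-int} reduces counting $|\gS(M)|$ to counting the pure closures $cl^{N}_{\K^{tf}}(M\cup\{a\})$ up to isomorphism over $M$ matching the marked element, and such a closure is determined by the divisibility data of $a$ over $M$. Both halves of your plan can be pushed through, and the lower bound in particular is essentially complete as written: for distinct injective $\sigma,\tau$, taking the least $k_0$ with $\sigma(k_0)\neq\tau(k_0)$ and reading off the $e_{\sigma(k_0)}$-coordinate of $s_n^{\sigma}-s_n^{\tau}$ (which has $2$-adic valuation exactly $k_0<n$, using injectivity twice) does contradict $s_n^{\sigma}-s_n^{\tau}\in 2^nM$.

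Two points need repair or completion. First, the realization of the formal $2$-adic limit $a_\sigma$ is not justified by ``amalgamation and no maximal models''; those give embeddings into prescribed groups, not the existence of a pure torsion-free extension realizing a given Cauchy sequence. Note also that the full $2$-adic completion of $M$ is \emph{not} a pure extension of $M$ (it is $q$-divisible for every odd prime $q$). Instead, either take the subgroup of the completion generated by $M$ and the tails $(a_\sigma-s^{\sigma}_n)/2^n$ --- its quotient by $M$ is isomorphic to $\mathbb{Z}[1/2]$, hence torsion-free, hence the extension is pure --- or observe that the conditions $2^n\mid x-s^{\sigma}_n$ are finitely satisfiable in $M$ and pass to an elementary (hence pure) extension by compactness. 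Second, the reconstruction lemma you flag as the main obstacle in the upper bound is indeed the crux, but it does go through, and more simply than your list of invariants suggests: for $a\notin M$ the complete invariant is the family of sets $A_p^j=\{m\in M: p^j\mid a-m \text{ in } N\}$ for $p$ prime and $j<\omega$. Every condition $p^j\mid ka+m$ reduces, after multiplying by a unit modulo $p^j$ and using purity of $M$ to pull powers of $p$ out of $m$, either to a condition on $M$ alone or to membership in some $A_p^{j'}$; each nonempty $A_p^j$ is a coset of $p^jM$ by purity, so the datum at $p$ is either a bounded-height coset (at most $\lambda\cdot\aleph_0$ choices) or an element of $\varprojlim_j M/p^jM$ (at most $\lambda^{\aleph_0}$ choices), for $\lambda^{\aleph_0}$ types in total; the Baer type of the quotient is recoverable from this data and need not be carried separately. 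Given equal invariants, the assignment sending the unique $x$ with $nx=ka_1+m$ to the unique $y$ with $ny=ka_2+m$ is well defined by torsion-freeness and is the required isomorphism over $M$. With these two points supplied your argument is correct, and the ``in particular'' clause follows as you say.
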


\subsection{Limit models}
In this subsection we classify the limit models in the class of torsion-free groups. It is clear that they are not divisible groups because if $G$ is not divisible then $G$ can not be a pure subgroup of a divisible group, but as we will show they are the next best thing, at least when the cofinality of the chain is uncountable. The examination of limit models will be done in two cases, we will first look at chains of uncountable cofinality and then at those of countable cofinality.

\begin{remark}
Since $\K^{tf}$ has joint embedding, amalgamation and no maximal models, $\K^{tf}$ has limit models when $\lambda^{\aleph_0}=\lambda$ (and only in those cardinals) by Fact \ref{st-tf} and Fact \ref{existence}.
\end{remark}

Recall the following characterization of algebraically compact groups \cite[\S 6.1.3]{fuchs}. For more on algebraically compact groups the reader can consult \cite[\S 6]{fuchs}.

\begin{defin}
A group $G$ is algebraically compact if given $\mathbb{E}=\{f_i(x_{i_0},...,x_{i_{n_i}})=a_i : i < \omega \}$ a set of linear equations over $G$, $\mathbb{E}$ is finitely solvable in $G$ if and only if  $\mathbb{E}$ is solvable in $G$.
\end{defin}



\begin{lemma}\label{limit-ac}
If $G$ is a $(\lambda,\alpha)$-limit model and $cf(\alpha)\geq \omega_1$, then $G$ is algebraically compact.
\end{lemma}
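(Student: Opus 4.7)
The plan is to exploit the uncountable cofinality of the chain to localize any countable system of equations, and then to use universality of successor stages to place a pure-injective (i.e.\ algebraically compact) hull of an initial segment inside $G$.

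I would fix a chain $\{G_i : i<\alpha\}$ witnessing that $G$ is a $(\lambda,\alpha)$-limit model and, without loss of generality, a countable system $\mathbb{E}=\{f_i(\bar{x})=a_i : i<\omega\}$ of linear equations over $G$ that is finitely solvable in $G$. Because $cf(\alpha)\ge\omega_1$, the countable parameter set $\{a_i : i<\omega\}$ is contained in some $G_{i_0}$ with $i_0<\alpha$. Now let $H$ be the pure-injective hull of $G_{i_0}$, as constructed in \cite{fuchs}; since $G_{i_0}$ is torsion-free, so is $H$, and the standard cardinal bound gives $|H|\le |G_{i_0}|^{\aleph_0}=\lambda^{\aleph_0}=\lambda$ (the last equality holds because $\K^{tf}$ has a limit model in $\lambda$ only when $\lambda^{\aleph_0}=\lambda$, by Fact \ref{st-tf}). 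After padding $H$ with a pure direct summand such as $\mathbb{Q}^{(\lambda)}$ if necessary, I may assume $H\in \K^{tf}_\lambda$ and $G_{i_0}\le_p H$. Since $G_{i_0+1}$ is universal over $G_{i_0}$ in $\K^{tf}_\lambda$, there is a $\K^{tf}$-embedding $j:H\to G_{i_0+1}$ fixing $G_{i_0}$ pointwise; I identify $H$ with $j[H]$, so that $G_{i_0}\le_p H\le_p G$.

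The main step is to transfer finite solvability from $G$ to $H$. Because $H$ is pure-injective and $G_{i_0}\le_p G$, the defining extension property of pure-injectivity applied to the inclusion $G_{i_0}\hookrightarrow H$ yields a (not necessarily pure) homomorphism $\phi:G\to H$ that restricts to the identity on $G_{i_0}$. For any finite $F\subseteq\mathbb{E}$, pick a solution $\bar{g}\in G$ of $F$ (which exists by hypothesis); since $\phi$ is a homomorphism fixing each $a_i$, $\phi(\bar{g})$ solves $F$ in $H$. Thus $\mathbb{E}$ is finitely solvable in $H$, and by algebraic compactness of $H$ it has a solution $\bar{h}\in H$. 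But $H\le_p G$, so $\bar{h}$ is also a solution of $\mathbb{E}$ in $G$, as required.

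The proof essentially assembles two classical facts about abelian groups, both available from \cite{fuchs}: the existence of the pure-injective hull of a torsion-free group with the cardinal bound $|G|^{\aleph_0}$, and the extension property characterizing pure-injectivity. The only delicate point in the argument is ensuring that cardinalities line up so that the universality of $G_{i_0+1}$ over $G_{i_0}$ can be invoked on $H$, and this is exactly guaranteed by the stability assumption $\lambda^{\aleph_0}=\lambda$, which is automatic at any $\lambda$ at which limit models in $\K^{tf}$ exist.
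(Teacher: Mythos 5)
Your proof is correct, and it takes a genuinely different route from the paper's. The paper, after localizing the parameters $\{a_i : i<\omega\}$ in some $G_{\beta^*}$ exactly as you do, builds the auxiliary group $H$ by a first-order compactness argument: it writes down the elementary diagram of $G_{\beta^*}$, the theory of torsion-free abelian groups, the equations, and an explicit list of sentences forcing $G_{\beta^*}$ to sit purely inside the new model, checks finite satisfiability inside $G$, and then uses L\"{o}wenheim--Skolem to cut $H$ down to cardinality $\lambda$ before embedding it into $G_{\beta^*+1}$ by universality; the solution of $\mathbb{E}$ is then carried into $G$ by that embedding. You instead take $H$ to be the pure-injective hull of $G_{i_0}$, embed it into $G_{i_0+1}$ over $G_{i_0}$ by universality, and then exploit the extension property of pure-injectivity to produce a retraction $\phi: G \to H$ over $G_{i_0}$, which pushes the finite partial solutions of $\mathbb{E}$ from $G$ down into $H$; algebraic compactness of $H$ then finishes. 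The paper's argument is logically heavier (compactness plus the diagram trick) but needs no cardinality control beyond downward L\"{o}wenheim--Skolem, whereas yours is purely algebraic and more structural --- it actually exhibits an algebraically compact pure subgroup of $G$ containing the parameters --- at the modest cost of the cardinal bound $|H(G_{i_0})| \le \lambda^{\aleph_0} = \lambda$, which, as you correctly note, is automatic at any $\lambda$ where $\K^{tf}$ has limit models by Facts \ref{existence} and \ref{st-tf}. The only points worth making fully explicit are that the padded group $H \oplus (\oplus_\lambda \mathbb{Q})$ is still pure-injective (a finite direct sum of pure-injectives is pure-injective, and $\oplus_\lambda\mathbb{Q}$ is divisible hence pure-injective) and that $G_{i_0} \leq_p G$, which holds since purity is preserved along unions of pure chains and is what licenses the application of the extension property to the inclusion $G_{i_0} \hookrightarrow G$.
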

\begin{proof} Fix $\{G_\beta : \beta < \alpha\}$ a witness to the fact that $G$ is a $(\lambda, \alpha)$-limit model.
 Let $\mathbb{E}=\{f_i(x_{i_0},...,x_{i_{n_i}})=a_i : i < \omega \}$ a set of linear equations finitely solvable in $G$. Since $cf(\alpha)\geq \omega_1$ there is $\beta^*  < \alpha$ such that $\{a_i : i < \omega \} \subseteq G_{\beta^*}$. Add new constants $\{ c_i : i < \omega \}$ and consider:
\[\Sigma = \{ f_i(c_{i_0},...,c_{i_{n_i}})=a_i : i < \omega \} \cup ED(G_{\beta^*}) \cup T_{tf} \cup \{ \neg \exists x( nx=g): G_{\beta^*} \vDash \neg \exists x (nx =g), n\in \mathbb{N}, g\in G_{\beta^*} \},\]
where $T_{tf}$ is the first-order theory of torsion-free abelian groups and $ED(G_{\beta^*}) $ is the elementary diagram of $G_{\beta^*}$. 

Since $\mathbb{E}$ is finitely solvable in $G$ and $G_{\beta^*} \leq_p G$, it is easy to show that any finite subset of $\Sigma$  is realized in $G$. Then by compactness and L\"{o}wenheim-Skolem-Tarski there is $H \in K^{tf}_\lambda$ such that $G_{\beta^*}\leq_p H$ ($G_{\beta^*}$ is a pure subgroup by the last element in the definition of $\Sigma$) and $H \vDash \{ f_i(c_{i_0},...,c_{i_{n_i}})=a_i : i < \omega\}$ . Using the fact that $G_{\beta^* + 1}$ is universal over $G_{\beta^*}$, there is $f: H \xrightarrow[G_{\beta^*}]{} G_{\beta^* +1}$ and it is easy to show that $\{ f(c_i^{H}) : i < \omega\}$ is a set of solutions to $\mathbb{E}$ which is contained in $G$.  \end{proof}

As a simple corollary we obtain a new proof for the following well-known assertion, the assertion without the torsion-free hypothesis appears for example in  \cite[\S 6 1.10]{fuchs}.
\begin{cor}
Every torsion-free group can be embedded as a pure subgroup in  a torsion-free algebraically compact group.
\end{cor}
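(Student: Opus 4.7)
The strategy is to embed $G$ as a pure subgroup of a $(\lambda,\omega_1)$-limit model $H$ for a suitably large $\lambda$ and then invoke Lemma \ref{limit-ac} to conclude that $H$ is algebraically compact. The proof simply packages that lemma with the universal property of limit models (Fact \ref{simple}.(2)) and the existence criterion (Fact \ref{existence}), together with the stability characterization of Fact \ref{st-tf}.

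Concretely, I would first choose a cardinal $\lambda \geq |G|$ satisfying $\lambda^{\aleph_0} = \lambda$; for instance $\lambda = (|G| + \aleph_0)^{\aleph_0}$ works and automatically gives $\lambda \geq 2^{\aleph_0} \geq \aleph_1$, so $\omega_1 < \lambda^+$. Next I would embed $G$ as a pure subgroup of a torsion-free group of size exactly $\lambda$, e.g.\ by taking $G^* := G \oplus (\oplus_{\lambda}\mathbb{Z})$ and noting that $nG^* \cap G = nG$ for every $n\in\mathbb{N}$. By Fact \ref{basictf} and Fact \ref{st-tf}, $\K^{tf}$ satisfies the hypotheses of Fact \ref{existence} at $\lambda$, so a $(\lambda,\omega_1)$-limit model $H \in \K^{tf}_{\lambda}$ exists. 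Fact \ref{simple}.(2) then provides a $\K^{tf}$-embedding $f: G^* \to H$, and since the strong substructure relation of $\K^{tf}$ is $\leq_p$, this means $f[G^*] \leq_p H$, and in particular $f[G] \leq_p H$. Finally, since $\cf(\omega_1) = \omega_1 \geq \omega_1$, Lemma \ref{limit-ac} applies to show that $H$ is algebraically compact, completing the proof.

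There is no substantive obstacle: the argument is a straightforward corollary of Lemma \ref{limit-ac}. The only care needed is the cardinal arithmetic, where we must simultaneously ensure $\lambda$-Galois-stability (requiring $\lambda^{\aleph_0} = \lambda$) and that a chain of length $\omega_1$ fits inside $\lambda^+$; both are handled by the single choice $\lambda = (|G| + \aleph_0)^{\aleph_0}$.
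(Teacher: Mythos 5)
Your proposal is correct and follows essentially the same route as the paper, which deduces the corollary from joint embedding, Fact \ref{simple} and Lemma \ref{limit-ac}; you have simply made explicit the choice of $\lambda$ with $\lambda^{\aleph_0}=\lambda$, the padding of $G$ to a group of size exactly $\lambda$, and the appeal to Fact \ref{existence} for the existence of a $(\lambda,\omega_1)$-limit model. No gaps.
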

\begin{proof}
Follows from the joint embedding property, Fact \ref{simple} and the previous lemma.
\end{proof}

Before proving a theorem parallel to Theorem \ref{abelian}, we prove the following proposition. In it the group $\mathbb{Z}_{(p)}$ will play a crucial role, recall that $\mathbb{Z}_{(p)}=\{ n/m : (m,p)= 1 \}$. 

\begin{prop}\label{zpdim}
If $G$ is a $(\lambda, \alpha)$-limit model, then $dim_{\mathbb{F}_p}(G/pG)=\lambda$ for all $p$ prime.\footnote{Notice that the proposition includes the case when the cofinality of $\alpha$ is countable.}
\end{prop}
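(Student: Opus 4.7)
The plan is to establish the equality by proving the two inequalities $\dim_{\mathbb{F}_p}(G/pG) \leq \lambda$ and $\dim_{\mathbb{F}_p}(G/pG) \geq \lambda$ separately. The upper bound is immediate because $|G|=\lambda$, so the quotient $G/pG$ has at most $\lambda$ elements and therefore $\mathbb{F}_p$-dimension at most $\lambda$.

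For the lower bound, I would exhibit $\lambda$ many $\mathbb{F}_p$-linearly independent elements in $G/pG$ coming from a free subgroup of $G$. The free abelian group $F := \oplus_\lambda \mathbb{Z}$ is torsion-free of cardinality $\lambda$, hence lies in $\K^{tf}_\lambda$. Since $G$ is a limit model of cardinality $\lambda$ in $\K^{tf}$, which by Fact \ref{basictf} has joint embedding and amalgamation, Fact \ref{simple}.(2) produces a $\K^{tf}$-embedding $f : F \to G$. By the definition of $\K^{tf}$-embedding, $f[F]$ is a \emph{pure} subgroup of $G$.

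Let $\{e_i : i < \lambda\}$ denote the standard basis of $F$. I claim that $\{f(e_i) + pG : i < \lambda\}$ is $\mathbb{F}_p$-linearly independent in $G/pG$, which finishes the proof. To verify the claim, fix a finite $I \subseteq \lambda$ and integers $(c_i)_{i \in I}$ with $\sum_{i \in I} c_i f(e_i) \in pG$. Purity of $f[F]$ in $G$ yields $pG \cap f[F] = p\, f[F]$, so this sum already lies in $p\, f[F]$. Pulling back through $f$, we obtain $\sum_{i \in I} c_i e_i \in pF$, and in the free abelian group $F$ this forces $p \mid c_i$ for every $i \in I$, proving independence.

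I do not expect a substantial obstacle here; the only delicate point is remembering that the embedding given by Fact \ref{simple}.(2) is automatically pure (that is built into the meaning of $\K^{tf}$-embedding), which is precisely the ingredient that transports linear independence modulo $p$ from $F$ to $G$. In particular, the argument never uses the length $\alpha$ of the chain witnessing that $G$ is a limit model, which is consistent with the footnote asserting that the statement holds also in the countable cofinality case.
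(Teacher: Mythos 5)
Your proof is correct and follows essentially the same route as the paper: both arguments purely embed a direct sum of $\lambda$ rank-one groups into $G$ (the paper uses $\oplus_{\lambda}\mathbb{Z}_{(p)}$ via universality of $G_1$ over $G_0$, you use $\oplus_{\lambda}\mathbb{Z}$ via Fact \ref{simple}.(2)) and then use purity of the image to transfer $\mathbb{F}_p$-linear independence of the basis elements to $G/pG$. The difference in the choice of rank-one summands and in how the pure embedding is produced is immaterial.
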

\begin{proof} Fix $\{G_i : i < \alpha\}$ a witness to the fact that $G$ is a $(\lambda, \alpha)$-limit model. Notice that $G_0 \leq_{p} G_0 \oplus (\oplus_{\lambda} \mathbb{Z}_{(p)})$, then using that $G_1$ is universal over $G_0$, there is $f:  G_0 \oplus (\oplus_{\lambda} \mathbb{Z}_{(p)})  \xrightarrow[G_0]{} G$. In particular, we may assume that $(\oplus_{\lambda} \mathbb{Z}_{(p)}) \leq_{p} G$. 

 \underline{Claim}: $\{ e_i : i < \lambda \} \subseteq (\oplus_{\lambda} \mathbb{Z}_{(p)}) \subseteq G$ satisfy that for every $g \in G$, $A \subseteq_{fin} \lambda$ and  $(n_i)_{i\in A} \in \{0,...,p-1\}^{|A|} \backslash \{\bar{0}\}$ the following holds:
\[ \Sigma_{i\in A} n_ie_i\neq pg .\]

Where each $e_i$ is the $i^{th}$-element of the canonical basis.

 \underline{Proof of Claim}: Suppose for the sake a contradiction that it is not the case, then there is  $g \in G$, $A \subseteq_{fin} \lambda$ and  $(n_i)_{i \in A} \in \{0,...,p-1\}^{|A|}\backslash \{\bar{0}\}$ such that \[ \Sigma_{i\in A} n_ie_i=pg .\]

Since  $(\oplus_{\lambda} \mathbb{Z}_{(p)}) \leq_{p} G$ and $G\in K^{tf}$, we have that $g \in (\oplus_{\lambda} \mathbb{Z}_{(p)})$. Then $g=\Sigma_{i\in B} g_i$ for $B \subseteq_{fin} \lambda$ and unique $(g_i)_{i\in B} \in \mathbb{Z}_{(p)}^{|B|}$. Hence using the above equality it follows that $n_i=pg_i$ for each $i \in A$. Then $p$ would divide the denominator of $g_i$ for some $i \in A$, contradicting the fact that each $g_i \in \mathbb{Z}_{(p)}$, or $g=0$, contradicting the linear independence of the $e_i$'s.$\dagger_{\text{Claim}}$

From the above claim it follows that $\{ e_i + pG : i < \lambda\}$ is a linearly independent set over $\mathbb{F}_p$. Hence  $dim_{\mathbb{F}_p}(G/pG)=\lambda$. \end{proof}

The following fact puts together the information from \cite[\S 1]{ef} that we will need in this paper.\footnote{ We recommend the reader to take a look at \cite[\S 1]{ef} or \cite[\S 6.3]{fuchs}.}

\begin{fact}\label{ac}
If $G$ is a torsion-free algebraically compact group, then:
\[ G \cong (\oplus_{\delta} \mathbb{Q}) \oplus \Pi_{p \text{ prime}} \overline{(\oplus_{\beta_p} \mathbb{Z}_{(p)})}.\]
Where:
\begin{enumerate}
 \item $\beta_p =dim_{\mathbb{F}_p}(G/pG)$ for all $p$ prime  (\cite[1.7.a]{ef}).
\item $\delta = rk_0(G_d)$, where $G_d$ is the maximal divisible subgroup of $G$ (\cite[1.10]{ef}).
\item $\mathbb{Z}_{(p)}=\{ n/m : (m,p)= 1 \}$ for $p$ prime and the overline refers to the completion\footnote{For the reader familiar with abelian group theory, this is precisely the pure-injective hull (see \cite[\S 6.4]{fuchs}).} (look at the discussion between \cite[1.4]{ef} and \cite[1.6]{ef}).
\end{enumerate}
\end{fact}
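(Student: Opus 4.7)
The plan is to decompose $G$ into its divisible part and its reduced part, handle each separately, and then verify that the invariants appearing in the decomposition match $\delta$ and the $\beta_p$'s. First I would use the classical fact that divisible subgroups of any abelian group split off as direct summands, so that $G = G_d \oplus G_r$ where $G_d$ is the maximal divisible subgroup and $G_r$ is reduced; note that both summands inherit torsion-freeness, and $G_r$ inherits algebraic compactness since it is a direct summand of $G$.

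For $G_d$: a torsion-free divisible group is a $\mathbb{Q}$-vector space, so by the structure theorem cited after Lemma 3.4 we get $G_d \cong \oplus_{\delta} \mathbb{Q}$ with $\delta = rk_0(G_d)$, exactly matching (2). For $G_r$: the key is the classical structure theorem for reduced torsion-free algebraically compact (equivalently, cotorsion) groups, which says that such a group is topologically complete in its $\mathbb{Z}$-adic topology and decomposes canonically as a product $\prod_p T_p$ of its $p$-adic components $T_p = \bigcap_{n<\omega}(G_r + q^n G_r)$-type summands, where $q$ ranges over primes other than $p$. Each $T_p$ is a reduced torsion-free algebraically compact $\mathbb{Z}_{(p)}$-module, hence is the pure-injective hull of a free $\mathbb{Z}_{(p)}$-module, i.e.\ $T_p \cong \overline{(\oplus_{\beta_p^r}\mathbb{Z}_{(p)})}$ for a cardinal $\beta_p^r$ uniquely determined by $T_p$.

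It remains to identify the cardinals. Since $G_d$ is divisible we have $pG_d = G_d$, hence $G/pG \cong G_r/pG_r$, and because the product $\prod_p T_p$ of pure-injective hulls satisfies $T_q / p T_q = 0$ for $q\neq p$ (as $p$ is invertible in $\mathbb{Z}_{(q)}$), we get $G/pG \cong T_p/pT_p$. Finally, $\overline{(\oplus_{\beta_p^r}\mathbb{Z}_{(p)})}/p\,\overline{(\oplus_{\beta_p^r}\mathbb{Z}_{(p)})} \cong \mathbb{F}_p^{(\beta_p^r)}$ (the completion does not enlarge the dimension modulo $p$ because reduction mod $p$ kills the $p$-adic tail), so $\dim_{\mathbb{F}_p}(G/pG) = \beta_p^r$, giving $\beta_p = \beta_p^r$ as required.

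The main obstacle is the second step: establishing that a reduced torsion-free algebraically compact group splits canonically as a product indexed by primes of $\mathbb{Z}_{(p)}$-adic completions of free modules. This is the genuinely hard structural input; once granted, the computation of the invariants via $G/pG$ and $rk_0(G_d)$ is routine, relying only on $pG_d=G_d$ and the compatibility of completion with reduction modulo $p$. In a self-contained presentation one would either appeal to Kaplansky's theorem on complete modules over a DVR, or directly build the $T_p$-components using the topology and verify their universal properties; since the author already cites \cite[\S 1]{ef} for this decomposition, I would similarly invoke that structure theorem as a black box.
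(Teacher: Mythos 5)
The paper offers no proof of this statement: it is labelled a Fact and is simply assembled from the cited results of Eklof--Fischer. Your sketch is a correct reconstruction of the standard argument behind those citations (split off the divisible part, apply the Kaplansky-style structure theory for the reduced torsion-free algebraically compact part, then identify the invariants via $G/pG \cong T_p/pT_p$ and $rk_0(G_d)$), and since you ultimately invoke the same structure theorem as a black box, this is essentially the same approach as the paper's.
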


\begin{lemma}\label{pureac}
If $G$ is a $(\lambda, \alpha)$-limit model and $G$ is algebraically compact, then \[G \cong   (\oplus_{\lambda} \mathbb{Q}) \oplus \Pi_{p \text{ prime}} \overline{(\oplus_{\lambda} \mathbb{Z}_{(p)})}. \]
\end{lemma}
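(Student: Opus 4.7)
The plan is to invoke Fact \ref{ac} and verify that both cardinal invariants appearing there are equal to $\lambda$. Since $G$ is torsion-free and algebraically compact, Fact \ref{ac} gives
\[ G \cong (\oplus_{\delta} \mathbb{Q}) \oplus \Pi_{p \text{ prime}} \overline{(\oplus_{\beta_p} \mathbb{Z}_{(p)})}, \]
with $\beta_p = \dim_{\mathbb{F}_p}(G/pG)$ and $\delta = rk_0(G_d)$, where $G_d$ is the maximal divisible subgroup of $G$. So the task reduces to showing $\beta_p = \lambda$ for every prime $p$ and $\delta = \lambda$.

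The equality $\beta_p = \lambda$ is immediate from Proposition \ref{zpdim}, which was proven precisely for this purpose and does not require the cofinality hypothesis. The upper bound $\beta_p \leq \lambda$ is automatic from $|G/pG| \leq |G| = \lambda$, but Proposition \ref{zpdim} already gives equality.

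For $\delta = \lambda$ I would mimic the technique used in Proposition \ref{zpdim} but with $\mathbb{Q}$ in place of $\mathbb{Z}_{(p)}$. Fix the witnessing chain $\{G_i : i < \alpha\}$. Since $G_0$ is a pure subgroup of $G_0 \oplus (\oplus_{\lambda} \mathbb{Q})$, and the latter is in $K^{tf}_\lambda$, the universal property of $G_1$ over $G_0$ produces a pure embedding $f: G_0 \oplus (\oplus_{\lambda} \mathbb{Q}) \xrightarrow[G_0]{} G_1 \leq_p G$. The image of the summand $\oplus_{\lambda} \mathbb{Q}$ is a divisible subgroup of $G$ of torsion-free rank $\lambda$, and hence is contained in the maximal divisible subgroup $G_d$. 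Therefore $rk_0(G_d) \geq \lambda$, while the reverse inequality is trivial from $\|G\| = \lambda$. Hence $\delta = \lambda$.

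I do not anticipate a real obstacle here: the lemma is essentially a bookkeeping consequence of Fact \ref{ac}, Proposition \ref{zpdim}, and one additional application of the universal property to embed $\oplus_\lambda \mathbb{Q}$, exactly parallel to the proof of Theorem \ref{abelian}. The only minor point to verify is that $G_0 \leq_p G_0 \oplus (\oplus_\lambda \mathbb{Q})$ as a pure subgroup, which holds because direct summands are always pure, and that the embedding lands inside $G_d$, which holds because any divisible subgroup of a torsion-free (indeed, any) group is contained in its maximal divisible subgroup.
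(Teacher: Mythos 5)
Your proposal is correct and follows essentially the same route as the paper: reduce to Fact \ref{ac}, get $\beta_p=\lambda$ from Proposition \ref{zpdim}, and get $\delta=\lambda$ by using universality of $G_1$ over $G_0$ to embed $G_0\oplus(\oplus_\lambda \mathbb{Q})$ over $G_0$ and observing the image of $\oplus_\lambda\mathbb{Q}$ lies in $G_d$. No substantive differences.
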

\begin{proof} Fix $\{G_i : i < \alpha\}$ a witness to the fact that $G$ is a $(\lambda, \alpha)$-limit model. 
Since by hypothesis $G$ is algebraically compact, by Fact \ref{ac} it is enough to show that  $\beta_p=\lambda$ for all $p$ prime and that $\delta=\lambda$.

By Fact \ref{ac}.(1) and Proposition \ref{zpdim} it follows that $\beta_p= dim_{\mathbb{F}_p}(G/pG)=\lambda$ for all $p$ prime, so we just need to show that $\delta=\lambda$. Observe that $G_0 \leq_p G_0 \oplus   (\oplus_{\lambda} \mathbb{Q})$, then there is $f:  G_0 \oplus   (\oplus_{\lambda} \mathbb{Q}) \xrightarrow[G_0]{} G$, from which it follows that $ rk_0(G_d)= \lambda$ since $f[   (\oplus_{\lambda} \mathbb{Q})] \subseteq G_d$. Hence by Fact \ref{ac}.(2), we have that $\delta=\lambda$.  
\end{proof}

With this we obtain our main result on limit models of uncountable cofinality.

\begin{theorem}\label{purec}
If $G$ is a $(\lambda, \alpha)$-limit model and $cf(\alpha) \geq \omega_1$, then \[G \cong   (\oplus_{\lambda} \mathbb{Q}) \oplus \Pi_{p \text{ prime}} \overline{(\oplus_{\lambda} \mathbb{Z}_{(p)})}. \]
\end{theorem}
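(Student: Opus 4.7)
The plan is to observe that this theorem is an immediate consequence of the two preceding lemmas, which together do all the real work. First I would invoke Lemma \ref{limit-ac}: since $G$ is a $(\lambda,\alpha)$-limit model with $\cf(\alpha)\geq\omega_1$, $G$ is algebraically compact. Then I would feed this into Lemma \ref{pureac}, which precisely asserts that any algebraically compact $(\lambda,\alpha)$-limit model has the desired decomposition
\[ G \cong (\oplus_{\lambda}\mathbb{Q}) \oplus \Pi_{p \text{ prime}} \overline{(\oplus_{\lambda} \mathbb{Z}_{(p)})}.\]

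The substantive content is all hidden in the two previous lemmas, so there is no real obstacle at this stage. Lemma \ref{limit-ac} does the model-theoretic work: it exploits uncountable cofinality to absorb a countable system of linear equations into a single $G_{\beta^*}$, uses compactness and Löwenheim–Skolem to build an extension $H \in \K^{tf}_\lambda$ of $G_{\beta^*}$ realizing the system, and then pulls back a solution via the universality of $G_{\beta^*+1}$ over $G_{\beta^*}$. Lemma \ref{pureac} combines the structure theorem for algebraically compact torsion-free groups (Fact \ref{ac}) with the two universality arguments computing $\beta_p$ and $\delta$: one embeds $G_0 \oplus (\oplus_\lambda \mathbb{Z}_{(p)})$ into $G$ to get $\dim_{\mathbb{F}_p}(G/pG)=\lambda$ (Proposition \ref{zpdim}), and the other embeds $G_0 \oplus (\oplus_\lambda \mathbb{Q})$ to give $rk_0(G_d)=\lambda$.

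Thus the proof I would write is essentially one line, simply citing Lemma \ref{limit-ac} followed by Lemma \ref{pureac}. If anything deserves emphasis, it is that the hypothesis $\cf(\alpha)\geq\omega_1$ is used only through Lemma \ref{limit-ac}; the structural computation of the invariants $\beta_p$ and $\delta$ in Lemma \ref{pureac} does not need it, which is why the countable-cofinality case fails for an entirely different reason (the limit model is simply not algebraically compact, to be shown in the next subsection).
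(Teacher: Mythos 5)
Your proposal is correct and coincides exactly with the paper's own proof, which simply cites Lemma \ref{limit-ac} to get algebraic compactness and then applies Lemma \ref{pureac}. Your additional remark about where the cofinality hypothesis enters is accurate but not needed for the argument.
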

\begin{proof}
By Lemma \ref{limit-ac} $G$ is algebraically compact. Then the result follows from Lemma \ref{pureac}.
\end{proof}

The following corollary follows directly from Theorem \ref{purec}.

\begin{cor}
If $G$ is a $(\lambda, \alpha)$-limit model and $H$ is a $(\lambda, \beta)$-limit model such that $cf(\alpha), cf(\beta) \geq \omega_1$, then $G\cong H$. 
\end{cor}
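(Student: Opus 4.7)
The plan is to deduce this immediately from Theorem \ref{purec}. By hypothesis both $G$ and $H$ are limit models whose chain lengths have cofinality at least $\omega_1$, so Theorem \ref{purec} applies to each of them separately. This yields
\[ G \;\cong\; (\oplus_{\lambda} \mathbb{Q}) \oplus \Pi_{p \text{ prime}} \overline{(\oplus_{\lambda} \mathbb{Z}_{(p)})} \;\cong\; H, \]
where the middle isomorphism is simply the identification of two identical expressions. Composing the two isomorphisms gives $G \cong H$, which is what is claimed.

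Since the entire content of the corollary is repackaging Theorem \ref{purec}, there is no real obstacle and no further argument is required; the one-sentence proof essentially writes itself. The only thing worth verifying is that the description on the right-hand side of Theorem \ref{purec} genuinely depends on nothing beyond $\lambda$ (so that the two applications produce literally the same group up to isomorphism), which is immediate from the way the group is written.
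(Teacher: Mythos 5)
Your proof is correct and matches the paper's own argument: the corollary is stated in the paper as following directly from Theorem \ref{purec}, exactly as you argue. Applying that theorem to each of $G$ and $H$ and noting that the resulting group depends only on $\lambda$ is all that is needed.
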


\begin{remark}
Since $\K^{tf}$ has joint embedding, amalgamation, no maximal models and is $(<\aleph_0)$-tame, by \cite[3.7]{vaseyt} non-splitting has weak continuity and then by \cite[11.3, 11.7]{vaseyt} it follows that  $\K^{tf}$ has uniqueness of limit models for large $\lambda$ and $cf(\alpha)$. Therefore, the result of the above corollary is only new for small cardinals.
\end{remark}

The next corollary follows from the above corollary doing a similar construction to \cite[2.8.(3)]{grva}.

\begin{cor}\label{sat}
If $G$ is a $(\lambda, \alpha)$-limit model and $cf(\alpha)\geq \omega_1$, then $G$ is $\lambda$-Galois-saturated.\footnote{Recall that $G$ is $\lambda$-Galois-saturated if for every $H \lea G$ and $p \in \gS(H)$ such that $\|H\| < \lambda$, $p$ is realized in $G$. $G$ is Galois-saturated if it is $\| G\|$-Galois-saturated.} 
\end{cor}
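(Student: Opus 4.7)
The plan is to leverage the preceding corollary (uniqueness of limit models of uncountable cofinality) to replace $G$ by an isomorphic $(\lambda, \mu^+)$-limit model $M$ whose approximating chain has cofinality strictly greater than $\|H\|$, and then to realize the given type in the successor step of that chain by universality. This is essentially the construction of \cite[2.8.(3)]{grva}, the role of uniqueness being to let us tailor the cofinality of the chain to the submodel at hand.

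In detail: given $H \lea G$ with $\|H\| = \mu < \lambda$ and $p \in \gS(H)$, I first observe that $\mu \geq \aleph_0$ (as $H \in \K^{tf}$) and that $\lambda \geq \aleph_1$, since Fact \ref{st-tf} gives $\lambda^{\aleph_0} = \lambda$ (using that $G$ is a limit model in $\K^{tf}_\lambda$). Hence $\mu^+$ is a regular uncountable cardinal with $\mu^+ \leq \lambda < \lambda^+$, so by Fact \ref{existence} there exists a $(\lambda, \mu^+)$-limit model $M$, and by the preceding corollary $G \cong M$. I fix such an isomorphism and identify $H$ and $p$ with their images in $M$, so that $H \lea M$ and $p \in \gS(H)$. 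Let $\{M_i : i < \mu^+\}$ witness $M$ as a $(\lambda, \mu^+)$-limit.

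Since $cf(\mu^+) = \mu^+ > \mu$, there is $\beta < \mu^+$ with $|H| \subseteq |M_\beta|$, and by coherence $H \lea M_\beta$. Using amalgamation I extend $p$ to some $q \in \gS(M_\beta)$, and take $a$ realizing $q$ in some $N \in \K^{tf}_\lambda$ with $M_\beta \lea N$ (shrinking $N$ by L\"owenheim-Skolem-Tarski if necessary). Universality of $M_{\beta+1}$ over $M_\beta$ produces $f : N \xrightarrow[M_\beta]{} M_{\beta+1}$, and then $f(a) \in M_{\beta+1} \lea M$ realizes $q \upp_H = p$; pulling back along the isomorphism furnishes a realization of the original $p$ in $G$.

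The one delicate point is the choice of chain cofinality: the chain originally witnessing that $G$ is a $(\lambda, \alpha)$-limit cannot be used directly, because $\|H\|$ could exceed $cf(\alpha)$ and then $H$ need not lie in any $G_\beta$, so the successor-step trick would fail. Uniqueness of limit models of uncountable cofinality is precisely the tool that lets me swap in a chain of cofinality $\mu^+$ adapted to the given $H$, after which the rest of the argument is a routine application of coherence, amalgamation, and universality.
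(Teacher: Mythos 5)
Your proposal is correct and is precisely the argument the paper intends: the paper only remarks that the corollary ``follows from the above corollary doing a similar construction to \cite[2.8.(3)]{grva}'', and your write-up fleshes out exactly that — using uniqueness of limit models of uncountable cofinality to pass to a $(\lambda,\mu^+)$-limit chain whose cofinality exceeds $\|H\|$, then realizing the type at a successor step by universality.
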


This finishes the characterization of $G$ when $G$ is a $(\lambda, \alpha)$-limit model and the cofinality of $\alpha$ is uncountable, we know tackle the question when the cofinality of $\alpha$ is countable. Regarding it, we will only have negative results, i.e., we will show that if $G$ is a $(\lambda, \alpha)$-limit model then $G$ is not algebraically compact. In order to do that, we will use some deep results on AECs which appear in \cite{grva} and \cite{vas16a}. Realize that since limit models with lengths of chains of the same cofinality are isomorphic, we only need to study $(\lambda, \omega)$-limit models.

The proof will be divided into two parts. In the first we will use \cite{grva} and \cite{vas16a} to show that for $\lambda$ big $(\lambda, \omega)$-limit models are not algebraically compact and in the second we will reflect the big groups into smaller cardinalities.

The following fact contains the information we will need from \cite{grva} and \cite{vas16a}. For the readers not familiar with the theory of AECs this can be taken as a black box.

\begin{fact}\label{saec}
Assume that $\K$ has joint embedding, amalgamation, no maximal models, $\LS(\K)=\aleph_0$ and is $(<\aleph_0)$-tame.
Let $\lambda \geq \beth_{(2^{\aleph_0})^+ +  \omega}$ be such that $\K$ is $\lambda$-Galois-stable and there is a Galois-saturated model of cardinality $\lambda$. If every limit model of cardinality $\lambda$ is Galois-saturated, then $\K$ is $\chi$-Galois-stable for every $\chi\geq \lambda$.
\end{fact}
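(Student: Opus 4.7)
The plan is to extract Galois-superstability from the hypotheses and then deduce stability on a tail of cardinals. First, I would note that the assumption ``every limit model of cardinality $\lambda$ is Galois-saturated'' together with Fact \ref{existence} yields uniqueness of limit models in cardinality $\lambda$: any two limit models in $\K_\lambda$ are now Galois-saturated of the same cardinality, and by a standard back-and-forth argument (using joint embedding and amalgamation to transfer Galois-types between them) any two such models are isomorphic. In particular, the $(\lambda, \omega)$-limit model and the $(\lambda, \lambda)$-limit model coincide up to isomorphism.

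Next, I would invoke the Grossberg--Vasey machinery to upgrade uniqueness of limit models at a sufficiently large cardinal to full Galois-superstability. In the tame setting with $\LS(\K)=\aleph_0$, the Hanf number $\beth_{(2^{\aleph_0})^+}$ is the natural threshold; the extra $+\omega$ in the hypothesis $\lambda \geq \beth_{(2^{\aleph_0})^+ + \omega}$ gives room to run a local-character argument for $\mu$-splitting at cardinals slightly below $\lambda$. Concretely, if $\mu$-splitting failed to have local character, one could build a witnessing chain producing two non-isomorphic limit models at $\lambda$ of different cofinalities, contradicting the uniqueness established in the previous step. Once local character of splitting is secured, one has Galois-superstability in the sense of the footnote attached to the introduction.

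Finally, Galois-superstability together with $(<\aleph_0)$-tameness gives, by the stability spectrum theorem for tame AECs (\cite{tamenessone} and the strengthenings in \cite{vaseyt}), that $\K$ is Galois-stable in every cardinal above some threshold; since $\lambda$ itself is a stability cardinal and lies above the Hanf number, we obtain $\chi$-stability for every $\chi \geq \lambda$.

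The main obstacle is the middle step: one has to set up the non-splitting independence notion carefully enough that (i) it has weak continuity on chains of saturated models of size $\lambda$, and (ii) a failure of local character at some $\mu<\lambda$ can actually be parlayed into two non-isomorphic limit models at $\lambda$. This is the technical heart of \cite{grva} and \cite{vas16a}; the bookkeeping involves carefully matching cofinalities of chains of saturated models with cofinalities of the ordinals along which splitting is failing to stabilize, which is precisely why the hypothesis is stated at the Hanf-number threshold rather than for arbitrary $\lambda$.
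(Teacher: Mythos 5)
Your proposal follows essentially the same route as the paper's proof sketch: use the saturation of limit models at $\lambda$ (above the Hanf number) to derive local character of $\mu$-splitting via the machinery of \cite{grva}, and then transfer $\lambda$-Galois-stability upward to all $\chi \geq \lambda$ using $(<\aleph_0)$-tameness and \cite{vas16a}. The intermediate reformulation through uniqueness of limit models is harmless, since the paper's cited result \cite[3.18]{grva} uses the saturation hypothesis in exactly the way you describe.
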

\begin{proof}[Proof sketch]
By \cite[3.2]{grva} $\K$ does not have the $\aleph_0$-order property of length $\beth_{(2^{\aleph_0})^+}$. Then by \cite[3.18]{grva} $\K$ has no long splitting chains in $\lambda$. Since $\K$  has no long splitting chains in $\lambda$, is $\lambda$-Galois-stable and is $(<\aleph_0)$-tame by \cite[5.6]{vas16a} we can conclude that $\K$ is $\chi$-Galois-stable for every $\chi\geq \lambda$.
\end{proof}

\begin{lemma}\label{large} Let  $\lambda \geq \beth_{(2^{\aleph_0})^+ +  \omega}$. If $G$ is a $(\lambda, \omega)$-limit model, then $G$ is not algebraically compact.
\end{lemma}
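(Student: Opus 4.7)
The plan is to argue by contradiction. Suppose $G$ is a $(\lambda, \omega)$-limit model and that $G$ is algebraically compact. Since the $(\lambda, \omega)$-limit model exists, Fact \ref{existence} gives that $\K^{tf}$ is $\lambda$-Galois-stable, so by Fact \ref{st-tf} we have $\lambda^{\aleph_0}=\lambda$; in particular, $(\lambda, \omega_1)$-limit models also exist. By Lemma \ref{pureac} applied to $G$, we get
\[ G \cong (\oplus_{\lambda} \mathbb{Q}) \oplus \Pi_{p \text{ prime}} \overline{(\oplus_{\lambda} \mathbb{Z}_{(p)})}. \]
But by Theorem \ref{purec} the $(\lambda, \omega_1)$-limit model is isomorphic to the same group, so $G$ is isomorphic to it and hence, by Corollary \ref{sat}, $G$ is Galois-saturated.

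The next step is to promote this to \emph{every} limit model of cardinality $\lambda$. Any limit model of cardinality $\lambda$ is a $(\lambda, \alpha)$-limit model for some limit $\alpha < \lambda^+$. By Fact \ref{scof} it is isomorphic to either the $(\lambda, \omega)$-limit model (when $cf(\alpha) = \omega$) or the $(\lambda, \omega_1)$-limit model (when $cf(\alpha) \geq \omega_1$). Under our assumption both candidates coincide and equal the Galois-saturated group displayed above, so every limit model of cardinality $\lambda$ is Galois-saturated.

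Now I would invoke Fact \ref{saec}. Its hypotheses are met for $\K^{tf}$: joint embedding, amalgamation, no maximal models (Fact \ref{basictf}), $\LS(\K^{tf})=\aleph_0$ and $(<\aleph_0)$-tameness (Lemma \ref{tflocal}); $\lambda \geq \beth_{(2^{\aleph_0})^+ + \omega}$ by assumption; $\K^{tf}$ is $\lambda$-Galois-stable and $G$ witnesses the existence of a Galois-saturated model of cardinality $\lambda$. The conclusion of Fact \ref{saec} is that $\K^{tf}$ is $\chi$-Galois-stable for every $\chi \geq \lambda$.

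The contradiction comes by picking $\chi = \lambda^{+\omega}$. Then $cf(\chi) = \omega$, so by K\"onig's theorem $\chi^{\aleph_0} > \chi$, and Fact \ref{st-tf} says $\K^{tf}$ is \emph{not} $\chi$-Galois-stable, contradicting the previous paragraph. The main obstacle is mostly bookkeeping: one must ensure that the assumption of algebraic compactness really forces \emph{all} limit models of cardinality $\lambda$ (not just the $(\lambda,\omega)$-one) to be Galois-saturated, so that Fact \ref{saec} applies; once that is in place, the stability-spectrum machinery of \cite{grva} and \cite{vas16a} does all the work.
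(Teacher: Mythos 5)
Your proposal is correct and follows essentially the same route as the paper's proof: assume algebraic compactness, apply Lemma \ref{pureac} and Theorem \ref{purec} to get uniqueness of limit models in $\lambda$ and hence Galois-saturation of all of them, feed this into Fact \ref{saec}, and contradict the stability spectrum of Fact \ref{st-tf}. Your explicit choice of $\chi = \lambda^{+\omega}$ and the case split via Fact \ref{scof} just make explicit steps the paper leaves to the reader.
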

\begin{proof}
Since $G$ is a $(\lambda, \omega)$-limit model, it follows that $\K^{tf}$ is $\lambda$-Galois-stable by Fact \ref{existence}.

Assume for the sake of contradiction that $G$ is algebraically compact, then by Lemma \ref{pureac} $G \cong   (\oplus_{\lambda} \mathbb{Q}) \oplus \Pi_{p \text{ prime}} \overline{(\oplus_{\lambda} \mathbb{Z}_{(p)})}$. Then by Theorem \ref{purec} $\K^{tf}$ has uniqueness of limit models of cardinality $\lambda$. Hence every limit model of cardinality $\lambda$ is Galois-saturated by \cite[2.8.(3)]{grva}.

By Fact \ref{basictf} and Lemma \ref{tflocal} $\K^{tf}$ has  joint embedding, amalgamation, no maximal models, $\LS(\K^{tf})=\aleph_0$ and is $(<\aleph_0)$-tame. Then by Fact \ref{saec} $\K^{tf}$ is $\chi$-Galois-stable for every $\chi \geq \lambda$. But this contradicts Fact \ref{st-tf}, since there is $\chi \geq \lambda$ such that $\chi^{\aleph_0}\neq \chi$.
\end{proof}

\begin{lemma}\label{small} Let  $\lambda <  \beth_{(2^{\aleph_0})^+ +  \omega}$. If $G$ is a $(\lambda, \omega)$-limit model, then $G$ is not algebraically compact.
\end{lemma}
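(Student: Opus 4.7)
The plan is to derive a contradiction by reflecting the failure of algebraic compactness at a large cardinal, provided by Lemma \ref{large}, down to the given small $\lambda$. Assume for contradiction that $G$ is algebraically compact.

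I will first deduce uniqueness of limit models at $\lambda$. By Lemma \ref{pureac}, $G \cong (\oplus_{\lambda} \mathbb{Q}) \oplus \Pi_{p \text{ prime}} \overline{(\oplus_{\lambda} \mathbb{Z}_{(p)})}$. By Theorem \ref{purec}, every $(\lambda, \alpha)$-limit model with $cf(\alpha)\geq \omega_1$ has the same form and is therefore isomorphic to $G$. Combined with Fact \ref{scof}, every $(\lambda, \alpha)$-limit in $\K^{tf}$ is isomorphic to $G$ and hence algebraically compact.

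Next, fix $\mu \geq \beth_{(2^{\aleph_0})^+ + \omega}$ with $\mu^{\aleph_0}=\mu$ (for example $\mu = \beth_{(2^{\aleph_0})^+ + \omega}^{\aleph_0}$). By Fact \ref{existence} there is a $(\mu,\omega)$-limit model $H$, which by Lemma \ref{large} is not algebraically compact. Fix a countable system $\mathbb{E}=\{f_i(\bar{x}_i)=a_i : i<\omega\}$ of linear equations over $H$ that is finitely solvable in $H$ but has no global solution there. For each finite $E_0\subseteq \mathbb{E}$ choose $\sigma(E_0)\subseteq H$ solving $E_0$, and let $S\subseteq H$ be the countable set consisting of the $a_i$'s together with all entries of all such $\sigma(E_0)$.

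The heart of the argument is to construct a $(\lambda,\omega)$-limit model $G'\leq_p H$ with $S\subseteq G'$. Write $H=\bigcup_{n<\omega}H_n$ with $H_{n+1}$ universal over $H_n$ in $\K^{tf}_\mu$, and enumerate $S=\{s_n : n<\omega\}$. I will build $G'_n \leq_p H$ in $\K^{tf}_\lambda$ with $G'_{n+1}$ universal over $G'_n$ in $\K^{tf}_\lambda$, $\{s_0,\ldots,s_n\}\subseteq G'_{n+1}$, and $G'_n\leq_p H_{m(n)}$ for some $m(n)<\omega$. At the successor step, choose $m>m(n)$ with $s_{n+1}\in H_m$; use L\"owenheim-Skolem to get $L$ of size $\lambda$ with $G'_n \cup \{s_{n+1}\}\subseteq L \leq_p H_m$; take a universal extension $K$ of $G'_n$ in $\K^{tf}_\lambda$; amalgamate $K$ and $L$ over $G'_n$ in $\K^{tf}$ to obtain $M$ of size $\lambda$; amalgamate $M$ with $H_m$ over $L$ to obtain $N$ of size $\leq \mu$; and by universality of $H_{m+1}$ over $H_m$ embed $N$ into $H_{m+1}$ over $H_m$. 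Setting $G'_{n+1}$ to be the image of $M$ in $H_{m+1}$ (and $m(n+1)=m+1$) produces a $\lambda$-sized pure subgroup of $H$ containing $L$ (hence $s_{n+1}$) and a copy of $K$, so $G'_{n+1}$ is universal over $G'_n$ by Fact \ref{simple}(1). Taking $G'=\bigcup_n G'_n$ and invoking smoothness gives $G'\leq_p H$, a $(\lambda,\omega)$-limit with $S\subseteq G'$.

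By the first paragraph, $G'\cong G$ is algebraically compact. Since $\mathbb{E}$ is finitely solvable in $G'$ (each $\sigma(E_0)$ sits in $S\subseteq G'$), algebraic compactness provides a global solution in $G'$, which then lies in $H$ because $G'\leq_p H$, contradicting the choice of $\mathbb{E}$. The main obstacle is the construction of $G'$ in the previous paragraph: the chain building $G'$ must be threaded through the chain for $H$ so as to preserve both the pure inclusion of each $G'_n$ into $H$ and the universality of $G'_{n+1}$ over $G'_n$ in $\K^{tf}_\lambda$; this is delicate because a $\lambda$-sized subset of $H$ need not lie in any single $H_m$, and is resolved by combining amalgamation in $\K^{tf}$ with the successive universality of $H_{n+1}$ over $H_n$ in $\K^{tf}_\mu$.
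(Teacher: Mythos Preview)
Your proof is correct and follows the same strategy as the paper: reflect a witness system $\mathbb{E}$ from a large non-algebraically-compact $(\mu,\omega)$-limit model down into a $(\lambda,\omega)$-limit model sitting purely inside it, then obtain a contradiction. The paper's execution is marginally leaner --- it omits your first paragraph (since $G'\cong G$ is immediate from Fact~\ref{scof}, both being $(\lambda,\omega)$-limits) and uses a single amalgamation plus L\"owenheim--Skolem per step rather than your two amalgamations --- but the core idea and structure are identical.
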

\begin{proof}
Since $G$ is a $(\lambda, \omega)$-limit model, it follows that $\K^{tf}$ is $\lambda$-Galois-stable by Fact \ref{existence}.

Let $\mu \geq \beth_{(2^{\aleph_0})^+ +  \omega}$ such that $\mu^{\aleph_0}=\mu$, by Fact \ref{st-tf} $\K^{tf}$ is $\mu$-Galois-stable. Let $G^*$ a $(\mu, \omega)$-limit model witnessed by $\{ G^*_i : i < \omega \}$. By Lemma \ref{large} $G^*$ is not algebraically compact, so there is $\mathbb{E}=\{f_k(x_{k_0},...,x_{k_{n_k}})=a_k : k < \omega \}$ a set of linear equations finitely solvable in $G^*$ but not solvable in $G^*$.

We build $\{r_i : i < \omega\} \subseteq \mathbb{N}$, $\{S_i : i < \omega \}$ and $\{ H_i : i < \omega \}$ by induction such that:
\begin{enumerate}
\item $\{r_i : i < \omega \} $ is strictly increasing.
\item $a_i \in H_i$.
\item $S_i \subseteq H_i$ and $S_i$ is a finite set.
\item $S_i$ has a solution to $\{f_k(x_{k_0},...,x_{k_{n_k}})=a_k : k \leq i \}$. 
\item $H_i \leq_p G_{r_i}^*$.
\item $H_i \in \K_\lambda^{tf}$.
\item $H_{i+1}$ is universal over $H_i$.
\end{enumerate} 

Before we do the construction, let us show that this is enough. Let $H_\omega := \bigcup_{i < \omega} H_i$, by (6) and (7) it follows that $H_\omega$ is a $(\lambda, \omega)$-limit model. Since limit models of the same cofinality are isomorphic by Fact \ref{scof}, it follows that $H_\omega \cong G$. So it is enough to show that $H_\omega$ is not algebraically compact. Assume for the sake of contradiction that $H_\omega$ is algebraically compact. Since $\mathbb{E}=\{f_k(x_{k_1},...,x_{k_{n_k}})=a_k : k < \omega \}$ is finitely solvable in $H_\omega$ by (4), it follows that there is $\ba \in H_\omega^\omega$ a solution for $\mathbb{E}$. But this contradicts the fact that $\mathbb{E}$ is not solvable in $G^*$, since $H_\omega \leq_p G^*$ by (5). Therefore, $H_\omega$ is not algebraically compact.

Now let us do the construction.

\fbox{Base} 	Let $\{b_0,...,b_l \} \subseteq G^*$ a solution to $f_0(x_{0_0},...,x_{0_{n_0}})=a_0$, this exists by finite solvability of $\mathbb{E}$ in $G^*$, and $r < \omega$ such that $\{b_0,...,b_l, a_0 \} \subseteq G^*_r$.  Let $r_0:= r$, $S_0 := \{b_0,...,b_l \}$ and applying L\"{o}wenheim-Skolem-Tarski axiom to  $\{b_0,...,b_l, a_0 \}$ in $G_{r_0}^*$ we get $H_0 \in \K_\lambda^{tf}$ such that $H_0 \leq_p G_{r_0}^*$ and $\{b_0,...,b_l, a_0 \}  \subseteq H_0$. It is easy to see that this works.

\fbox{Induction  step} By construction there are $r_i \in\mathbb{N}$ and $H_i \leq_p G_{r_i}^*$. Since $\K^{tf}$ is $\lambda$-Galois-stable we can build $H \in \K_\lambda^{tf}$ such that $H$ is universal over $H_i$ by Fact \ref{existence}. Using that $H_i \leq_p  G_{r_i}^*$, the amalgamation property and that $G_{r_i + 1}^*$ universal over $G_{r_i}^*$, there is $f: H \xrightarrow[H_i]{} G_{r_i +1}^*$.

Let $\{b_0,...,b_l\} \subseteq G^*$ a solution to $\{f_k(x_{k_0},...,x_{k_{n_k}})=a_k : k \leq i+1 \}$ and take $r \geq r_i  + 1$ such that $\{b_0,...,b_l,a_{i+1}\} \subseteq G^*_{r}$. Let $r_{i+1}:=r$, $S_{i+1}:= \{b_0,...,b_l \}$ and applying L\"{o}wenheim-Skolem-Tarski axiom to $f[H] \cup  \{b_0,...,b_l,a_{i+1}\} $ in $G_{r_{i+1}}^*$ we get $H_{i+1} \in \K_\lambda^{tf}$ such that $H_{i+1} \leq_p G_{r_{i+1}}^*$ and $f[H] \cup  \{b_0,...,b_l,a_{i+1}\} \subseteq H_{i+1}$.  Using that $H_i\leq_p f[H] \leq_{p} H_{i+1}$ and that $f[H]$ is universal over $H_i$, it is easy to show that (1) through (7) hold.
\end{proof}

Putting together the last two lemmas we obtain the following.

\begin{theorem}\label{limit-count}
If $G$ is a $(\lambda,\omega)$-limit model, then $G$ is not algebraically compact.
\end{theorem}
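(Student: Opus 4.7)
The proof is simply a case split on the cardinal $\lambda$. Lemma~\ref{large} already handles the range $\lambda \geq \beth_{(2^{\aleph_0})^+ + \omega}$ and Lemma~\ref{small} already handles $\lambda < \beth_{(2^{\aleph_0})^+ + \omega}$, and these two ranges between them exhaust all infinite cardinals. So my entire plan is the one-liner: if $\lambda \geq \beth_{(2^{\aleph_0})^+ + \omega}$ apply Lemma~\ref{large}, otherwise apply Lemma~\ref{small}.

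All the substantive work has already been done in the two preceding lemmas, so there is nothing new to carry out at the level of Theorem~\ref{limit-count} itself. It may be worth pointing out in the write-up that the threshold $\beth_{(2^{\aleph_0})^+ + \omega}$ appearing in the statements of the two lemmas is an artifact of Fact~\ref{saec} (the packaging of the no-long-splitting-chains and stability-transfer results of \cite{grva} and \cite{vas16a}), so there is no arithmetic to reconcile between the two cases and the split is clean.

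The genuine obstacle in the whole development is Lemma~\ref{large}: the argument there is by contradiction, using Lemma~\ref{pureac} and Theorem~\ref{purec} to promote algebraic compactness of one $(\lambda,\omega)$-limit model into uniqueness of limit models at $\lambda$ and hence into $\lambda$-Galois-saturation of every limit model at $\lambda$, and then invoking Fact~\ref{saec} to push $\lambda$-Galois-stability all the way up the cardinals, in contradiction with Fact~\ref{st-tf}, which forces $\chi^{\aleph_0}=\chi$ whenever $\K^{tf}$ is $\chi$-Galois-stable. Lemma~\ref{small} then reflects this downward via an $\omega$-step construction of a $(\lambda,\omega)$-limit chain inside a large $(\mu,\omega)$-limit model carrying an unsolvable-but-finitely-solvable system of linear equations. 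Once those two lemmas are in hand, Theorem~\ref{limit-count} is a pure bookkeeping step.
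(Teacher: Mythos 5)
Your proposal is correct and is exactly the paper's own proof: a case split on whether $\lambda \geq \beth_{(2^{\aleph_0})^+ + \omega}$, invoking Lemma~\ref{large} in the first case and Lemma~\ref{small} in the second. The additional remarks about where the real work lies accurately reflect the structure of the preceding two lemmas.
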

\begin{proof}
If $\lambda \geq  \beth_{(2^{\aleph_0})^+ +  \omega}$ it follows from Lemma \ref{large} and if $\lambda <  \beth_{(2^{\aleph_0})^+ +  \omega}$ it follows from Lemma \ref{small}.
\end{proof}

\begin{remark}
After discussing Theorem \ref{limit-count} with Sebastien Vasey, he realized that by applying \cite[4.12]{vaseyt} instead of \cite[3.18]{grva} one could prove Theorem \ref{limit-count} without dividing the proof into cases. The proof using \cite[4.12]{vaseyt} is similar to that of Lemma \ref{large}. We decided to keep our original argument since the proof presented here shows how to transfer the failure of being algebraically compact and since we believe that showing that there are cofinally many $(\lambda, \omega)$-limit models that are not algebraically compact is provable using only group theoretic methods.
\end{remark}

Since $(\lambda, \omega)$-limit models are not algebraically compact we ask:

\begin{question}
Is there a natural class of groups that contain the $(\lambda, \omega)$-limit models?
\end{question}

Regarding the structure of $(\lambda, \omega)$-limit models, using the fact that every group is a direct sum of a divisible group and a reduced group\footnote{Recall that a group $H$ is reduced if its only divisible subgroup is $0$.} (see \cite[\S 4.2.5]{fuchs}), it is straightforward to show that if $G$ is a $(\lambda, \omega)$-limit model, then $G \cong   (\oplus_{\lambda} \mathbb{Q}) \oplus G_r$ where $G_r \cong G/G_d$, $G_d$ is the maximal divisible subgroup of $G$ and $G_r$ is reduced. So it is natural to ask the following.

\begin{question}
Is there a structure theorem for $(\lambda, \omega)$-limit models similar to that of Theorem \ref{purec}?
\end{question}

Let us conclude with the main theorem of this section.

\begin{theorem} \label{tf}
If $G$ is a $(\lambda, \alpha)$-limit model in $\K^{tf}$, then we have that:
\begin{enumerate}
\item If the cofinality of $\alpha$ is uncountable, then $G \cong   (\oplus_{\lambda} \mathbb{Q}) \oplus \Pi_{p \text{ prime}} \overline{(\oplus_{\lambda} \mathbb{Z}_{(p)})}$.
\item If the cofinality of $\alpha$ is countable, then $G$ is not algebraically compact.
\end{enumerate}
In particular, $\K^{tf}$ does not have uniqueness of limit models for any infinite cardinal.
\end{theorem}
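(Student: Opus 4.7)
The plan is to stitch together the two detailed structural results proved earlier, namely Theorem~\ref{purec} for the uncountable cofinality case and Theorem~\ref{limit-count} for the countable cofinality case, and then deduce the ``in particular'' clause as a cheap consequence. So I view this statement primarily as a packaging theorem; all the real work is already done.

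For part (1), if $\mathrm{cf}(\alpha) \geq \omega_1$, I would simply invoke Theorem~\ref{purec}, which asserts exactly that $G \cong (\oplus_\lambda \mathbb{Q}) \oplus \Pi_{p\text{ prime}} \overline{(\oplus_\lambda \mathbb{Z}_{(p)})}$. For part (2), if $\mathrm{cf}(\alpha) = \omega$, I would first reduce to the case $\alpha = \omega$ using Fact~\ref{scof} (limit models whose chains have the same cofinality are isomorphic), and then apply Theorem~\ref{limit-count} to conclude that $G$ is not algebraically compact.

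For the ``in particular'' statement, the plan is as follows. By combining Fact~\ref{st-tf} with Fact~\ref{existence}, limit models of cardinality $\lambda$ exist in $\K^{tf}$ exactly when $\lambda^{\aleph_0} = \lambda$. But $\lambda^{\aleph_0} = \lambda$ forces $\lambda \geq 2^{\aleph_0} \geq \aleph_1$, so both $\omega < \lambda^+$ and $\omega_1 < \lambda^+$. Applying Fact~\ref{existence} again, there exists a $(\lambda,\omega)$-limit model $G_0$ and a $(\lambda,\omega_1)$-limit model $G_1$. Part~(1) forces $G_1$ to be algebraically compact (as the explicit group displayed is algebraically compact by Fact~\ref{ac}), while part~(2) says $G_0$ is not. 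Hence $G_0 \not\cong G_1$, and uniqueness of limit models fails in every cardinality where limit models exist; in cardinalities where they do not exist the statement is vacuous.

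The main obstacle has already been overcome in the preceding lemmas, particularly the reflection argument in Lemma~\ref{small} that transfers the failure of algebraic compactness from large cardinals (where Fact~\ref{saec} and the stability spectrum Fact~\ref{st-tf} give a contradiction) down to arbitrary cardinals admitting limit models. Given those inputs, the proof of Theorem~\ref{tf} is essentially a bookkeeping exercise and requires no new ideas.
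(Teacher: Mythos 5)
Your proposal is correct and follows essentially the same route as the paper: part (1) is Theorem \ref{purec}, part (2) is Theorem \ref{limit-count} (with the reduction to $\alpha=\omega$ via Fact \ref{scof}, which the paper also notes earlier), and the ``in particular'' clause follows because limit models of uncountable cofinality are algebraically compact while those of countable cofinality are not. Your extra verification that both a $(\lambda,\omega)$- and a $(\lambda,\omega_1)$-limit model exist whenever $\K^{tf}$ has any limit model of cardinality $\lambda$ (since $\lambda^{\aleph_0}=\lambda$ forces $\lambda\geq\aleph_1$) is a correct and welcome filling-in of a detail the paper leaves implicit.
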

\begin{proof}
The first part is Theorem \ref{purec} and the second one is Theorem \ref{limit-count}. The ``in particular" follows from the fact that limit models with chains of uncountable cofinality are algebraically compact by (1), while those with chains of countable cofinality are not algebraically compact by (2).
\end{proof}

\section{Finitely Butler Groups}

In this last section, we look at some basic properties of the  class of finitely Butler groups. The results in this section are weaker than those of the previous two sections and in some sense incomplete, but we decided to present them since we see this section as a stepping stone and moreover finitely Butler groups had never been isolated as an AEC.

Butler groups were introduced by Butler in \cite{butler}, while finitely Butler groups were first studied in \cite{bican} and given a name in \cite{fuchsv}. We follow the exposition of \cite[\S 14]{fuchs} and recommend the reader to consult it for further details.  

\begin{defin}
A torsion-free group $G$ of finite rank\footnote{Given $G$ a torsion-free group, the rank of $G$ is $rk_0(G)$ (see footnote 6 for the definition).}  is a \emph{Butler group} if $G$ is a pure subgroup of a finite rank completely decomposable group. Recall that a  torsion-free group is \emph{completely decomposable} if and only if it is the direct sum of groups of rank one. 
\end{defin}

\begin{defin}
A torsion-free group $G$ is a \emph{finitely Butler group} ($B_0$-group) if every pure subgroup of finite rank of $G$ is a Butler group.
\end{defin}

Let us introduce the class we will study. 

\begin{defin}
Let $\K^{B_0}=(K^{B_0}, \leq_p)$ where $K^{B_0}$ is the class of finitely Butler groups in the language $L_{ab}=\{0\} \cup \{ +,-\}$ and $\leq_p$ is the pure subgroup relation.
\end{defin}

\begin{remark}\label{easy}
Notice that if $G \in K^{B_0}$ and $H \leq_p G$, then $H \in K^{B_0}$.
\end{remark}

Our first assertion is that indeed $\K^{B_0}$ is an AEC.

\begin{lemma}
 $\K^{B_0}=(K^{B_0}, \leq_p)$ is an AEC with $\LS(\K^{B_0})=\aleph_0$ that admits intersections.
\end{lemma}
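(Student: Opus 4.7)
The plan is to verify the axioms of Definition \ref{aec-def} together with admitting intersections, leaning on the corresponding facts for $\K^{tf}$ plus Remark \ref{easy}. First I would dispatch the easy axioms: the partial order, isomorphism, and coherence clauses (namely, if $A \leq_p C$ and $A \subseteq B \leq_p C$, then $A \leq_p B$) are standard properties of purity in torsion-free groups. For the L\"owenheim--Skolem--Tarski axiom, given $M \in K^{B_0}$ and $A \subseteq |M|$, I would apply Fact \ref{basictf}.(1) to obtain $M_0 \leq_p M$ in $K^{tf}$ of cardinality at most $|A| + \aleph_0$; Remark \ref{easy} then upgrades $M_0$ to a member of $K^{B_0}$. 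Admitting intersections is handled identically from Fact \ref{basictf}.(2): the $\K^{tf}$-intersection of the $\leq_p$-substructures of $N$ containing $A$ already lies in $K^{B_0}$ by Remark \ref{easy}.

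The real content is the Tarski--Vaught axiom. Given a $\leq_p$-chain $\{G_i : i < \delta\} \subseteq K^{B_0}$, set $G := \bigcup_{i < \delta} G_i$. Standard purity arguments yield $G \in K^{tf}$, $G_i \leq_p G$ for every $i$, and smoothness. What remains, and is the main obstacle, is to show $G \in K^{B_0}$: every finite-rank pure subgroup $H \leq_p G$ must be Butler.

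For this, I would argue that every such $H$ is already contained in some $G_{i^*}$. The key observation is that the intersection of two pure subgroups of a torsion-free group is pure in each, so the chain $\{H \cap G_i : i < \delta\}$ consists of pure subgroups of $H$ whose union equals $H$. Picking a maximal independent subset $\{h_1, \dots, h_n\}$ of $H$ and $i^* < \delta$ with $\{h_1, \dots, h_n\} \subseteq G_{i^*}$, the subgroup $H \cap G_{i^*}$ already has the same finite rank $n$ as $H$. For any $i \geq i^*$, the quotient $(H \cap G_{i+1})/(H \cap G_i)$ is simultaneously torsion (equal ranks) and torsion-free (purity inside a torsion-free group), hence trivial; thus the chain $\{H \cap G_i\}_{i \geq i^*}$ is constant at $H \cap G_{i^*}$. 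Since the union of the full chain is $H$, we deduce $H \subseteq G_{i^*}$. Coherence of $\leq_p$ then upgrades this inclusion to $H \leq_p G_{i^*}$, and since $G_{i^*} \in K^{B_0}$, the finite-rank pure subgroup $H$ of $G_{i^*}$ is Butler. This closes the verification.
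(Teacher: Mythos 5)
Your proposal is correct and follows essentially the same route as the paper: the only real content is the first Tarski--Vaught axiom, and both arguments reduce it to showing that a finite-rank pure subgroup $H$ of the union already sits (purely) inside some $G_{i^*}$ containing a maximal linearly independent subset of $H$. The paper deduces $H \subseteq G_{i^*}$ directly from $H \subseteq \operatorname{span}_{\mathbb{Q}}(X)$ together with purity of $G_{i^*}$ in $G$, while you obtain it by stabilizing the chain $\{H \cap G_i\}$ via a torsion/torsion-free quotient argument --- the same purity-plus-rank mechanism in slightly different packaging.
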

\begin{proof}

From the closure under pure subgroups and the fact that $\K^{tf}$ is an AEC, it follows that $\K^{B_0}$ satisfies all the axioms of an AEC except the first Tarski-Vaught axiom. We show that it holds.\footnote{This is exercise \cite[\S 14.4.1]{fuchs}.}

Let $\{ G_i : i < \delta \}$ such that $G_i \leq_p G_j$ for all $i< j$ and $G =\bigcup_{i < \delta} G_i$. It is clear that $G_i \leq_p G$ for all $i< j$, so we only need to show that $G \in K^{B_0}$, so let $H \leq_p G$ of finite rank.

 Take $X$ a finite maximal linearly independent subset of $H$, it exists because $H$ has finite rank. Since $X$ is finite, there is $i < \delta$ such that $X \subseteq G_i$. Since $X$ is maximal linearly independent $H \subseteq span_{\mathbb{Q}}(X)$. Then using that $G_i \leq_p G$ and $G_i$ is torsion-free, it follows that $H\leq_p G_i$. Therefore, since $G_i \in K^{B_0}$, we conclude that $H$ is a Butler group.

Moreover, the class admits intersections because $\K^{tf}$ admits intersections and the closure of $\K^{B_0}$ under pure subgroups.
\end{proof}

\begin{fact}
$\K^{B_0}$ has joint embedding and no maximal models.
\end{fact}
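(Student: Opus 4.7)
My plan is to derive both assertions from a single closure property: that $K^{B_0}$ is closed under finite direct sums. Granting this, joint embedding is witnessed by $H := G_1 \oplus G_2$, since the canonical injections $G_i \hookrightarrow H$ are always pure embeddings between torsion-free groups. For no maximal models, note that $\mathbb{Z} \in K^{B_0}$ (it is completely decomposable of rank one, hence trivially Butler, and every pure subgroup of $\mathbb{Z}$ is either $0$ or $\mathbb{Z}$), so for any $G \in K^{B_0}$ the group $G \oplus \mathbb{Z}$ lies in $K^{B_0}$ and properly extends $G$ as a pure subgroup.

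The main lemma to establish is: if $G_1, G_2 \in K^{B_0}$, then $G_1 \oplus G_2 \in K^{B_0}$. To prove it, I would fix a pure subgroup $H \leq_p G_1 \oplus G_2$ of finite rank and show $H$ is Butler. Let $\pi_i : G_1 \oplus G_2 \to G_i$ denote the projections, and let $S_i$ be the pure closure of $\pi_i(H)$ in $G_i$. Since pure closures in torsion-free groups preserve rank, each $S_i$ is a finite-rank pure subgroup of $G_i$; since $G_i \in K^{B_0}$, each $S_i$ is Butler, so we can fix finite-rank completely decomposable groups $C_i$ with $S_i \leq_p C_i$. Then $S_1 \oplus S_2 \leq_p C_1 \oplus C_2$, and $C_1 \oplus C_2$ is again finite-rank completely decomposable, so $S_1 \oplus S_2$ is Butler.

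To conclude, I will verify $H \leq_p S_1 \oplus S_2$. The inclusion $H \subseteq S_1 \oplus S_2$ is immediate from $\pi_i(H) \subseteq S_i$, and purity follows from a short torsion-freeness argument: if $h \in H$ and $h = n s$ with $s \in S_1 \oplus S_2 \subseteq G_1 \oplus G_2$, then by purity of $H$ in $G_1 \oplus G_2$ there is $h' \in H$ with $h = n h'$, and torsion-freeness forces $s = h' \in H$. Thus $H$ is a finite-rank pure subgroup of the Butler group $S_1 \oplus S_2 \leq_p C_1 \oplus C_2$; transitivity of $\leq_p$ in $\K^{tf}$ exhibits $H$ as a pure subgroup of the finite-rank completely decomposable group $C_1 \oplus C_2$, so $H$ is Butler.

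The main obstacle is not really a difficulty in the argument itself but in assembling the supporting facts about Butler groups: that finite direct sums of finite-rank completely decomposable groups are completely decomposable, that pure closures preserve rank in torsion-free groups, and that $\leq_p$ is transitive in $\K^{tf}$. All three are classical and can be cited from \cite{fuchs}.
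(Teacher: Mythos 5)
Your proposal is correct and follows the same route as the paper: both derive joint embedding and no maximal models from the closure of $K^{B_0}$ under (finite) direct sums, the paper citing this closure property from \cite[\S 14.5.(B)]{fuchs} while you supply a self-contained proof of it via pure closures of the projections. Your verification that $H \leq_p S_1 \oplus S_2 \leq_p C_1 \oplus C_2$ is sound, so the extra work only makes the argument more self-contained.
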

\begin{proof}
By \cite[\S 14.5.(B)]{fuchs} $\K^{B_0}$ is closed under direct sums so the result follows.
\end{proof}

Regarding the amalgamation property, we are only able to provide the following partial solution. We actually think that the amalgamation property might not hold for the class.

\begin{lemma}
If $G \in K^{B_0}$ and $G$ is divisible, then $G$ is an amalgamation base, i.e., if $G\leq_p H_i \in K^{B_0}$ for $i\in \{1,2\}$, then there are $H \in K^{B_0}$ and $f_i: H_i \to H$ for $i\in \{1,2\}$ such that $f_1\rest_G =f_2\rest_G$.
\end{lemma}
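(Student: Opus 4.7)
The plan is to exploit the fact that divisible subgroups of abelian groups are always direct summands, which reduces the amalgamation problem to taking a direct sum.

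First I would recall the classical result that if $G$ is a divisible subgroup of an abelian group $A$, then $G$ is a direct summand of $A$ (see e.g.\ \cite[\S 4.2.1]{fuchs}). Applying this to the pure embeddings $G \leq_p H_i$ for $i \in \{1,2\}$, I obtain subgroups $K_i \leq H_i$ such that $H_i = G \oplus K_i$. Since $K_i$ is a direct summand of $H_i$, it is in particular a pure subgroup, so by Remark \ref{easy} we have $K_i \in K^{B_0}$.

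Next, I would set $H := G \oplus K_1 \oplus K_2$ and invoke \cite[\S 14.5.(B)]{fuchs} (already cited in the proof that $\K^{B_0}$ has joint embedding and no maximal models) to conclude that $H \in K^{B_0}$, since it is a direct sum of three groups in $K^{B_0}$. I would then define $f_i : H_i \to H$ for $i \in \{1,2\}$ to be the natural inclusions sending $(g, k_1) \mapsto (g, k_1, 0)$ and $(g, k_2) \mapsto (g, 0, k_2)$, respectively. Both maps are inclusions of direct summands into $H$, hence are pure embeddings, so they are $\K^{B_0}$-embeddings. By construction, $f_1\rest_G = \id_G = f_2 \rest_G$, which gives the required amalgam.

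I do not expect any serious obstacle here: the real content is the classical direct-summand theorem for divisible subgroups, after which the closure of $K^{B_0}$ under direct sums (already invoked in the preceding fact) does the rest. The subtlety worth double-checking is only that the pure-embedding hypothesis $G \leq_p H_i$ is not needed beyond ensuring that the setup makes sense, since divisibility alone forces $G$ to split off; and that purity of the inclusions $f_i$ is automatic from their being direct-summand inclusions (equivalently, retracts). This contrasts with the general amalgamation problem for $\K^{B_0}$, where the author indicates skepticism: without divisibility, there is no canonical way to split the factors and combine them into a finitely Butler group.
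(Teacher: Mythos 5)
Your proof is correct, but it takes a genuinely different route from the paper's. The paper forms the pushout $H := (H_1 \oplus H_2)/G^*$ with $G^* = \{(g,-g) : g \in G\}$, quotes \cite[3.27]{grp} for the statement that this amalgamates $H_1$ and $H_2$ over $G$ inside $K^{tf}$ via pure embeddings, and then does the real work: given a finite-rank pure subgroup $E/G^*$ of $H$, it uses divisibility of $G$ to show $E = G^* + cl^{E}(\{e_0,\dots,e_{n-1}\})$ for a maximal independent set, so that $E/G^*$ is a torsion-free epimorphic image of a finite-rank Butler group and hence Butler. You instead spend the divisibility hypothesis at the very start: since divisible subgroups are direct summands, $H_i = G \oplus K_i$ with $K_i$ pure in $H_i$ and hence in $K^{B_0}$, and then $G \oplus K_1 \oplus K_2$ amalgamates via summand inclusions, with membership in $K^{B_0}$ coming from closure under direct sums. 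Note that the two amalgams coincide up to isomorphism, since $(G \oplus K_1 \oplus G \oplus K_2)/G^* \cong G \oplus K_1 \oplus K_2$; the difference is only in how one verifies the finitely Butler property. Your argument is shorter and avoids the finite-rank analysis entirely; the paper's argument has the virtue of working directly with the canonical pushout used for amalgamation in $K^{tf}$, which makes visible exactly where divisibility is needed and hence why the author suspects amalgamation may fail for $\K^{B_0}$ in general. Both are complete proofs of the stated lemma.
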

\begin{proof}
Let $G \leq_p H_i$ for $i\in \{1,2\}$. Let $H := H_1 \oplus H_2/ G^*$ where $G^*:=\{(g,-g) : g \in G\}$, $f_1: H_1\to H$ be $f(h)=(h,0) + G^*$ and $f_2: H_2 \to H$ be $f(h)=(0, h) +  G^*$. In \cite[3.27]{grp} it is shown that $H \in K^{tf}$, $f_1,f_2$ are pure embeddings and $f_1\rest_G =f_2\rest_G$. So we only need to show that $H \in K^{B_0}$.

Let $E \subseteq H_1 \oplus H_2$ such that $E/G^* \leq_p  H_1 \oplus H_2/ G^*$ and $E/G^*$ has rank $n$. Take $\{e_i + G^* : i < n\}$ a maximal linearly independent subset of  $E/G^*$.

Observe that $E \leq_p H_1 \oplus H_2$, because $G^* \leq_p H_1 \oplus H_2$ and $E/G^* \leq_p  H_1 \oplus H_2/ G^*$. Moreover, $cl^{E}_{\K^{B_0}}(\{e_0,...,e_{n-1}\}) \leq_p H_1 \oplus H_2$, $cl^{E}_{\K^{B_0}}(\{e_0,...,e_{n-1}\})$ has finite rank and $H_1 \oplus H_2 \in K^{B_0}$ (see \cite[\S 14.5.(B)]{fuchs}), so it follows that  $cl^{E}_{\K^{B_0}}(\{e_0,...,e_{n-1}\})$  is a Butler group (where the closure is the one described in Proposition \ref{cl} by Remark \ref{easy}).

 \underline{Claim}: $E=  G^* + cl^{E}_{\K^{B_0}}(\{e_0,...,e_{n-1}\})$.

 \underline{Proof of Claim}: Let $e \in E$, since  $\{e_i + G^* : i < n\}$  is maximal linearly independent $e + G^* \in span_{\mathbb{Q}}(\{e_i + G^* : i < n\})$, then there are $\{ m,k_0,...,k_{n-1}\} \subseteq \mathbb{N}$ and $g^*_0 \in G^*$ such that: \[me= \Sigma_{i=0}^{n-1} k_ie_i + g^*_0.\]

Since $G$ is divisible, $G^*$ is divisible so there is $g^*_1 \in G^*$ such that $mg^*_1= g^*_0$. Then $m(e-g^*_1)=\Sigma_{i=0}^{n-1} k_ie_i$, thus $e-g^*_1 \in  cl^{E}_{\K^{B_0}}(\{e_0,...,e_{n-1}\})$. Hence $e \in  G^* + cl^{E_{\K^{B_0}}}(\{e_0,...,e_{n-1}\})$.$\dagger_{\text{Claim}}$

Then $E/G^* \cong G^* + cl^{E}_{\K^{B_0}}(\{e_0,...,e_{n-1}\})/ G^* \cong  cl^{E}_{\K^{B_0}}(\{e_0,...,e_{n-1}\})/  cl^{E}_{\K^{B_0}}(\{e_0,...,e_{n-1}\}) \cap G^*$. By the fact that  torsion-free epimorphic images of Butler groups are Butler groups (see \cite[\S 14.1.6]{fuchs}) and that $cl^{E}_{\K^{B_0}}(\{e_0,...,e_{n-1}\})$ is a Butler group, we conclude that $E/G^*$ is a Butler group. Hence $H \in K^{B_0}$. \end{proof}

The next proposition is straightforward, but we include it because of its strong consequences.
\begin{prop}\label{tp-b0}
If $G, H \in K^{B_0}$, $\ba \in G$, $\bb \in H$ and $A \subseteq G,H$, then $\gtp_{\K^{B_0}}(\ba/A; G)= \gtp_{\K^{B_0}}(\bb/A; H)$ if and only if $\gtp_{\K^{tf}}(\ba/A; G)= \gtp_{\K^{tf}}(\bb/A; H)$.

\end{prop}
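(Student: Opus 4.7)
The plan is to apply the intersection-closure characterization of Galois types (Fact \ref{tp-int}) in both classes and then observe that the two closure operators agree on finitely Butler groups.

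First I would note that both $\K^{tf}$ and $\K^{B_0}$ admit intersections, by Fact \ref{basictf}.(2) and by the lemma just established in Section 5. Hence by Fact \ref{tp-int}, equality of Galois types in either class is equivalent to the existence of an isomorphism fixing $A$ between the relevant closures that sends $\ba$ to $\bb$. Explicitly, $\gtp_{\K^{tf}}(\ba/A;G)=\gtp_{\K^{tf}}(\bb/A;H)$ iff there is $f: cl^G_{\K^{tf}}(\ba\cup A) \cong_A cl^H_{\K^{tf}}(\bb\cup A)$ with $f(\ba)=\bb$, and analogously for $\K^{B_0}$ with $cl_{\K^{B_0}}$ in place of $cl_{\K^{tf}}$.

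The heart of the argument is then to show that these two closures agree. Fix $G \in K^{B_0}$ and $B \subseteq G$ (in our case $B=\ba\cup A$). On the one hand, $cl^G_{\K^{tf}}(B)$ is a pure subgroup of $G$ containing $B$; since $G \in K^{B_0}$ and $K^{B_0}$ is closed under pure subgroups by Remark \ref{easy}, this intersection lies in $K^{B_0}$, so it is in particular a $\K^{B_0}$-substructure of $G$ containing $B$, hence $cl^G_{\K^{B_0}}(B) \leq cl^G_{\K^{tf}}(B)$. Conversely $cl^G_{\K^{B_0}}(B)$ is, by definition, a pure subgroup of $G$ containing $B$ (and is torsion-free as a subgroup of a torsion-free group), so it participates in the intersection defining $cl^G_{\K^{tf}}(B)$, giving the reverse inclusion. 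Thus $cl^G_{\K^{tf}}(B) = cl^G_{\K^{B_0}}(B)$, and likewise for $H$.

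With the two closures identified, the isomorphism condition given by Fact \ref{tp-int} for $\K^{tf}$ and the one for $\K^{B_0}$ are literally the same statement about group isomorphisms between identical subgroups over $A$, so the two equalities of Galois types are equivalent. I do not anticipate a serious obstacle: the only subtlety is verifying that the $\K^{tf}$-closure of a subset of a finitely Butler group remains in $K^{B_0}$, which is immediate from Remark \ref{easy}.
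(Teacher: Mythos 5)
Your proposal is correct and follows essentially the same route as the paper: both reduce the statement to Fact \ref{tp-int} via the fact that the two classes admit intersections, and both identify $cl_{\K^{B_0}}$ with $cl_{\K^{tf}}$ using closure of $K^{B_0}$ under pure subgroups (Remark \ref{easy}) together with minimality of the closures. You simply spell out the two inclusions that the paper leaves as ``easy to show.''
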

\begin{proof}
Since $\K^{B_0}$ is closed under pure subgroups by Remark \ref{easy}, using the minimality of the closures, it is easy to show that for all $H'\in K^{B_0}$ and $B \subseteq H'$ it holds that $cl_{\K^{B_0}}^{H'}(B)=cl_{\K^{tf}}^{H'}(B)$. Then using that $\K^{B_0}$ and $\K^{tf}$ admit intersections and Fact \ref{tp-int} the result follows.
\end{proof}

\begin{cor}\label{st-fb}\
\begin{itemize}
\item $\K^{B_0}$ is $(<\aleph_0)$-tame and short.
\item If $\lambda=\lambda^{\aleph_0}$, then $\K^{B_0}$ is $\lambda$-Galois-stable. In particular, $\K^{B_0}$ is a Galois-stable AEC.
\end{itemize}
\end{cor}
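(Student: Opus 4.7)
The plan is to reduce both assertions to the corresponding facts about $\K^{tf}$ via Proposition \ref{tp-b0}. Since $K^{B_0}\subseteq K^{tf}$ and the two classes share the pure subgroup relation, every triple $(\ba,A,N)$ witnessing a $\K^{B_0}$-Galois type is also a triple witnessing a $\K^{tf}$-Galois type, so there is a well-defined ``forgetful'' map $\Phi_A\colon \gS_{\K^{B_0}}(A)\to \gS_{\K^{tf}}(A)$ for every set $A$ contained in some $M\in K^{B_0}$ (and similarly for longer tuples). Proposition \ref{tp-b0} is exactly the statement that $\Phi_A$ is injective, and because restriction of types is defined on representatives, $\Phi$ commutes with restriction in both parameter set and index of the tuple.

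With this in hand tameness is immediate. Suppose $p\neq q$ lie in $\gS_{\K^{B_0}}(M)$. By injectivity of $\Phi_M$, $\Phi_M(p)\neq\Phi_M(q)$ in $\gS_{\K^{tf}}(M)$. By Lemma \ref{tflocal}, $\K^{tf}$ is $(<\aleph_0)$-tame, so some finite $A\subseteq M$ satisfies $\Phi_M(p)\rest_A\neq\Phi_M(q)\rest_A$, i.e.\ $\Phi_A(p\rest_A)\neq\Phi_A(q\rest_A)$, and then $p\rest_A\neq q\rest_A$. The proof of shortness is formally identical, restricting the index set of a long tuple in place of the parameter set and again invoking Lemma \ref{tflocal}.

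For stability, fix $M\in\K^{B_0}_\lambda$ with $\lambda^{\aleph_0}=\lambda$. Then $M\in\K^{tf}_\lambda$, and Fact \ref{st-tf} gives $|\gS_{\K^{tf}}(M)|\leq\lambda$; the injection $\Phi_M$ yields $|\gS_{\K^{B_0}}(M)|\leq\lambda$, so $\K^{B_0}$ is $\lambda$-Galois-stable. The ``in particular'' clause follows because there exist arbitrarily large such $\lambda$ (for instance any $\lambda=\mu^{\aleph_0}$).

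No serious obstacle appears: once Proposition \ref{tp-b0} is available, both parts are clean transfer arguments resting only on the inclusion $K^{B_0}\subseteq K^{tf}$ together with the previously established tameness, shortness, and stability of $\K^{tf}$. The one technical point worth verifying is that restriction commutes with the forgetful map, but this follows at once from the fact that both classes admit intersections and their closure operators agree on elements of $K^{B_0}$, which is implicit in the proof of Proposition \ref{tp-b0}.
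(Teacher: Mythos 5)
Your proposal is correct and follows essentially the same route as the paper: the paper's proof is exactly the transfer argument via Proposition \ref{tp-b0}, reducing tameness, shortness, and stability of $\K^{B_0}$ to the corresponding properties of $\K^{tf}$. You have merely made explicit the well-definedness and injectivity of the ``forgetful'' map on Galois types and its compatibility with restriction, which the paper leaves implicit.
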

\begin{proof}
The proof follows directly from Proposition \ref{tp-b0} and the fact that $\K^{tf}$ satisfies both of the properties we are trying to show.
\end{proof}

\begin{question}
Do we have as in $\K^{tf}$ that: if $\K^{B_0}$ is $\lambda$-Galois-stable, then $\lambda=\lambda^{\aleph_0}$?
\end{question}

We were unable to answer the above question, but we have a partial solution (see Lemma \ref{nst}). In order to present it, we will need some results from \cite[\S 12.1]{fuchs} and the following definitions.

\begin{defin} Let $G$ be a torsion-free abelian group and $a \in G$:
\begin{itemize}
\item Given a prime $p$ the \emph{$p$-height} of $a$ (denoted by $h_p(a)$) is the maximum $n\in \mathbb{N}$ such that $p^n|a$ or $\infty$ if the maximum does not exist. 
\item The \emph{characteristic} of $a$ is $\chi_G(a)=(h_{p_n}(a))_{n < \omega}$ where $\{p_n : n< \omega\}$ is an increasing enumeration of the prime numbers.
\item Given $\eta, \nu \in (\mathbb{N}\cup\{\infty\})^{\omega}$ we define the equivalence relation $\sim$ as $\eta \sim \nu$ if and only if $\eta$ and $\nu$ differ on finitely many natural numbers and when they differ they are both finite. A \emph{type $\textbf{t}$} is an element of $(\mathbb{N}\cup\{\infty\})^{\omega}/\sim$ and the \emph{type of $a$} is $\textbf{t}_{G}(a)=\chi_G(a)/\sim$.
\item We say that $G$ has type $\textbf{t}$, if for every $b \neq 0\in G$ it holds that $\textbf{t}=\textbf{t}_{G}(b)$.
\end{itemize}

\end{defin}

The proof of the following lemma uses similar ideas to those of \cite[3.7]{kojsh}.

\begin{lemma}\label{nst}
If $\lambda< 2^{\aleph_0}$, then $\K^{B_0}$ is not $\lambda$-Galois-stable.
\end{lemma}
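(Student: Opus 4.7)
The plan is to exhibit a single $M \in \K^{B_0}_\lambda$ over which at least $2^{\aleph_0}$ distinct Galois-types are realized, giving $|\gS(M)| > \lambda$ by the hypothesis $\lambda < 2^{\aleph_0}$. Take $M := \bigoplus_{i < \lambda} \mathbb{Z}$, which lies in $\K^{B_0}_\lambda$ because $\K^{B_0}$ is closed under direct sums (\cite[\S 14.5.(B)]{fuchs}). For each characteristic $\chi = (n_p)_{p \text{ prime}} \in (\mathbb{N} \cup \{\infty\})^\omega$, construct a rank-one torsion-free group $R_\chi \leq \mathbb{Q}$ together with a distinguished element $x_\chi \in R_\chi$ whose $p$-height equals $n_p$ for every prime $p$; concretely, let $R_\chi$ be the subgroup of $\mathbb{Q}$ generated by $\{1/p^k : p \text{ prime}, k \leq n_p\}$ and set $x_\chi := 1$. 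Each $R_\chi$ is a rank-one torsion-free group, hence completely decomposable, hence a Butler group, hence an element of $K^{B_0}$. Define $N_\chi := M \oplus R_\chi \in \K^{B_0}_\lambda$ (using closure under direct sums again), and observe that $M \leq_p N_\chi$ as a direct summand.

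Next I verify that the Galois-types $\gtp(x_\chi / M; N_\chi)$ are pairwise distinct as $\chi$ ranges over characteristics. Since $\K^{B_0}$ admits intersections, Fact \ref{tp-int} says an equality $\gtp(x_{\chi_1}/M; N_{\chi_1}) = \gtp(x_{\chi_2}/M; N_{\chi_2})$ would yield an isomorphism $f : cl^{N_{\chi_1}}_{\K^{B_0}}(M \cup \{x_{\chi_1}\}) \cong_M cl^{N_{\chi_2}}_{\K^{B_0}}(M \cup \{x_{\chi_2}\})$ sending $x_{\chi_1}$ to $x_{\chi_2}$. A direct-sum computation shows that the $p$-height of $x_{\chi_i}$ in $N_{\chi_i}$ equals its $p$-height in $R_{\chi_i}$: if $p^n(m + r) = x_{\chi_i}$ with $m \in M$ and $r \in R_{\chi_i}$, then torsion-freeness of $M$ forces $m = 0$, so $x_{\chi_i} \in p^n R_{\chi_i}$. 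Since closures are pure in $N_{\chi_i}$, the same value equals the $p$-height of $x_{\chi_i}$ inside the closure. Because group isomorphisms preserve $p$-heights, $\chi_1 = \chi_2$. Thus $\chi \mapsto \gtp(x_\chi / M; N_\chi)$ is injective, yielding $|\gS(M)| \geq |(\mathbb{N} \cup \{\infty\})^\omega| = 2^{\aleph_0} > \lambda$, so $\K^{B_0}$ is not $\lambda$-Galois-stable.

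The only delicate step is the height computation in the direct sum $N_\chi = M \oplus R_\chi$; everything else — closure of $K^{B_0}$ under direct sums, the intersection-based description of Galois-types, and the cardinality of the set of characteristics — is routine. The overall strategy parallels \cite[3.7]{kojsh}, where characteristics are used to witness instability, but here we exploit that rank-one torsion-free groups already live in $K^{B_0}$, which keeps the construction very light.
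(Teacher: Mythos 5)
Your proof is correct and follows essentially the same strategy as the paper: realize $2^{\aleph_0}$ pairwise distinct Galois-types over a model of size $\lambda$ by attaching rank-one torsion-free groups distinguished by the characteristic (the paper uses types $\textbf{t}_\eta$ and cites \cite[\S 12.1.1]{fuchs} for their realization, and gathers all the rank-one groups into a single extension $H$) and then using Fact \ref{tp-int} plus purity of closures to see that equality of Galois-types would force equal heights. The only differences are cosmetic: you work over the specific model $\oplus_\lambda\mathbb{Z}$ with explicitly constructed subgroups of $\mathbb{Q}$, whereas the paper proves the slightly stronger statement that $|\gS(G)|\geq 2^{\aleph_0}$ for an arbitrary $G\in\K^{B_0}_\lambda$.
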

\begin{proof}
Let $G\in \K_\lambda^{B_0}$ and $\{ \textbf{t}_\eta : \eta \in 2^\omega  \}$ an enumeration of all the types (in the sense of the previous definition). For each $\eta \in 2^{\omega}$, let $G_\eta$ a group of rank one with type $\textbf{t}_\eta$, it exists by \cite[\S 12.1.1]{fuchs}. Let $H= G \oplus (\oplus_{\eta \in 2^{\omega}} G_\eta)$. Since $\K^{B_0}$ is closed under direct sums  (see \cite[\S 14.5.(B)]{fuchs}) and rank one groups are in $\K^{B_0}$, because they are completely decomposable, we have that $H \in K^{B_0}$. 

For each $\eta \in 2^{\omega}$ take $a_\eta \in G_{\eta}$ with $a_\eta \neq 0$ and let $p_\eta:= \gtp(a_\eta/G; H)$. We show that all the Galois-types in the set $\{p_\eta : \eta \in 2^\omega \}$ are different.

\underline{Claim:} If $\eta \neq \nu \in 2^\omega$, then $p_\eta \neq p_\nu$.\\
 \underline{Proof of Claim}: Suppose for the sake of contradiction that $\gtp(a_\eta/G; H) = \gtp(a_\nu/G; H)$, then by Fact \ref{tp-int} there is $f: cl^H_{\K^{B_0}}(\{a_\eta\}\cup G) \cong_{G} cl^H_{\K^{B_0}}(\{a_\nu\} \cup G)$ with $f(a_\eta) =a_\nu$. Then since the closures give rise to pure subgroups of $H$ we have that $\chi_{H}(a_\eta)=\chi_H(a_\nu)$, so $\textbf{t}_{H}(a_\eta) =\textbf{t}_H(a_\nu)$. This contradicts the fact that $\textbf{t}_{H}(a_\eta) = \textbf{t}_{G_\eta}(a_\eta)=\textbf{t}_\eta  \neq \textbf{t}_\nu=\textbf{t}_{G_\nu}(a_\nu)= \textbf{t}_H(a_\nu)$, the first and last equality follow from the fact that $G_\eta, G_\nu \leq_p H$. $\dagger_{\text{Claim}}$

Therefore, $|\gS(G)|\geq 2^{\aleph_0}$. Since $\lambda < 2^{\aleph_0}$, $\K$ is not $\lambda$-Galois-stable.
\end{proof}

As we mentioned in the introduction we are interested in limit models, therefore we ask the following:

\begin{question}
Do limit models exist  in $\K^{B_0}$? If they exist, what is their structure?
\end{question}

Regarding the first part of the question, realize that if $\K^{B_0}$ has the amalgamation property, then by Corollary \ref{st-fb} and  Fact \ref{existence} limit models would exist. As for the second part, even if they existed the techniques to characterize them would have to be different from the ones presented in section four since finitely Butler groups do not seem to be first-order axiomatizable.

Besides the function of this article as a pool of examples of limit models in the context of AECs. We believe that the study of limit models (in different classes of groups) as a classes of infinite rank groups could be an interesting area of research on its own. We think this is possible since limit model are tame enough to be analyzable, but their theory is nontrivial as showcased in this article. A good place to look for new classes of limit models is \cite{baldwine}.


\end{document}